\newcommand{\Z}{\mathbb{Z}}
\newcommand{\Q}{\mathbb{Q}}
\newcommand{\C}{\mathbb{C}}
\numberwithin{theoremcounter}{section}
\newaliascnt{theoremauto}{theoremcounter}
\newaliascnt{Defauto}{theoremcounter}
\newaliascnt{exampleauto}{theoremcounter}
\newaliascnt{lemmaauto}{theoremcounter}
\newaliascnt{propositionauto}{theoremcounter}
\newaliascnt{corollaryauto}{theoremcounter}
\newaliascnt{remarkauto}{theoremcounter}
\newaliascnt{notationauto}{theoremcounter}
\newaliascnt{claimauto}{theoremcounter}
\newaliascnt{warningauto}{theoremcounter}
\newaliascnt{questionauto}{theoremcounter}
\newaliascnt{discussionauto}{theoremcounter}
\newaliascnt{computationauto}{theoremcounter}
\newaliascnt{conjectureauto}{theoremcounter}
\newaliascnt{convauto}{theoremcounter}
\newtheorem{theorem}[theoremauto]{Theorem}
\newtheorem{lemma}[lemmaauto]{Lemma}
\newtheorem{proposition}[propositionauto]{Proposition}
\newtheorem{corollary}[corollaryauto]{Corollary}
\newtheorem*{corollary*}{Corollary}
\newtheorem{atheorem}{Theorem}
\theoremstyle{definition}
\newtheorem{definition}[Defauto]{Definition}
\theoremstyle{remark}
\newtheorem{remark}[remarkauto]{Remark}
\newcommand{\T}{\ensuremath\mathcal{T}}
\DeclareMathOperator{\GL}{GL}
\DeclareMathOperator{\SL}{SL}
\DeclareMathOperator{\St}{St}
\DeclareMathOperator{\Sh}{Sh}
\DeclareMathOperator{\id}{id}
\DeclareMathOperator{\Sym}{Sym}
\title{Hopf algebras, Steinberg modules, and the unstable cohomology of $\SL_n(\Z)$ and $\GL_n(\Z)$}
\author{Avner Ash}
\email{avner.ash@bc.edu}
\address{Boston College\\
Chestnut Hill, MA 02445\\
USA}
 \author{Jeremy Miller}\thanks{Jeremy Miller was supported by NSF Grant DMS-2202943}
 \email{jeremykmiller@purdue.edu}  
\address{Purdue University \\
 	 West Lafayette IN, 47907 \\USA}
\author{Peter Patzt}
\email{ppatzt@ou.edu}
\address{University of Oklahoma\\
 	 Norman OK, 73019 \\USA}
	 \thanks{Peter Patzt was supported by a Simons Foundation Collaboration Grant}
	\def\MR#1{}}
\date{\today}
\begin{document}
	
\begin{abstract} 
We prove that the direct sum of all homology groups of the integral general linear groups with Steinberg module coefficients form a commutative Hopf algebra, in particular a free graded commutative algebra. We use this to construct new infinite families of unstable cohomology classes of $\SL_n(\Z)$.
\end{abstract}

\maketitle



\section{Introduction}

The goal of this paper is to prove a new structural result on the rational cohomology of general and special linear groups over the integers. Let $\St_n \Q$ denote the $n$th Steinberg module of the rationals (top degree reduced homology of the poset of subspaces of $\Q^n$) and $\det$ denote the determinant representation (unless stated otherwise, all homology will be taken with coefficients in a field of characteristic zero). By work of Borel--Serre \cite{BoSe} (see also Putman--Studenmund \cite{PutStu}), the twisted homology groups \[H_*(\GL_n \Z ;\St_n\Q) \text{ and } H_*(\GL_n \Z ;\St_n \Q \otimes \det) \] determine the cohomology  of $\SL_n(\Z)$ and $\GL_n(\Z)$:
\begin{itemize}
\item 
$H^{\binom n2 - k}(\SL_n \Z) \cong H_k(\GL_n \Z; \St_n \Q) \oplus  H_k(\GL_n \Z; \St_n \Q  \otimes \det)$ for $n>0$,
\item $H^{\binom n2 - k}(\GL_n \Z) \cong H_k(\GL_n \Z; \St_n \Q)$ for $n$ odd,
\item $H^{\binom n2 - k}(\GL_n \Z) \cong H_k(\GL_n \Z; \St_n \Q \otimes \det)$ for $n$ even.

\end{itemize}

There is a well-known equivariant product structure on the Steinberg modules (see e.g. \cite{MNP} and \cite{MPW}). In particular, there is a $\GL_n \Z \times \GL_m \Z$-equivariant map
\begin{equation}\label{Steinbergmult}
\St_n \Q \otimes \St_m \Q \longrightarrow \St_{n+m}\Q
\end{equation}
which induces ring structures on \[ H := \bigoplus_{k,n \ge 0} H_k(\GL_n \Z; \St_n \Q)\quad\text{and}\quad H^{\det} :=\bigoplus_{k,n \ge 0} H_k(\GL_n \Z;  \St_n \Q\otimes \det).\] We define coproduct structures on these rings which make them into Hopf algebras.

\begin{atheorem}\label{H(GL)}
$H$ and $H^{\det}$   have the structure of graded commutative Hopf algebras.

\end{atheorem}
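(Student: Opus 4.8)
The plan is to realize $H$ as a bialgebra whose structure maps all descend from maps of Steinberg modules, and then to invoke the automatic existence of an antipode for connected graded bialgebras; $H^{\det}$ will be handled identically. Throughout I would work with the integral apartment classes $\langle v_1,\dots,v_n\rangle$, which span $\St_n\Q$, and describe each structure map on these generators. The algebra structure is essentially forced: the product is the homology cross product
\[
H_i(\GL_a\Z;\St_a\Q)\otimes H_j(\GL_b\Z;\St_b\Q)\longrightarrow H_{i+j}\bigl(\GL_a\Z\times\GL_b\Z;\St_a\Q\otimes\St_b\Q\bigr)
\]
followed by the map induced by the block inclusion $\GL_a\Z\times\GL_b\Z\hookrightarrow\GL_{a+b}\Z$ and the Steinberg multiplication \eqref{Steinbergmult}, which on frames is concatenation, $\langle v_1,\dots,v_a\rangle\otimes\langle w_1,\dots,w_b\rangle\mapsto\langle v_1,\dots,v_a,w_1,\dots,w_b\rangle$. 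The unit is the generator of $H_0(\GL_0\Z;\St_0\Q)=\Q$, and associativity and unitality follow from the corresponding properties of \eqref{Steinbergmult}. For graded commutativity I would compare the two products by conjugating with the block-transposition matrix $\tau\in\GL_{a+b}\Z$: since $\tau$ is inner it acts trivially on $H_*(\GL_{a+b}\Z;\St_{a+b}\Q)$, so the swap contributes only the Koszul sign $(-1)^{ij}$ of the cross product together with the sign $(-1)^{ab}=\sgn(\tau)$ by which $\tau$ reorders the frame, giving commutativity with sign $(-1)^{ij+ab}$ for the total grading.

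Next I would construct the coproduct, the new ingredient, as the homological shadow of a $\GL_n\Z$-equivariant co-concatenation map
\[
\St_n\Q\longrightarrow\bigoplus_{a+b=n}\Ind_{\GL_a\Z\times\GL_b\Z}^{\GL_n\Z}\bigl(\St_a\Q\otimes\St_b\Q\bigr),\qquad\langle u_1,\dots,u_n\rangle\mapsto\sum_{a+b=n}\langle u_1,\dots,u_a\rangle\otimes\langle u_{a+1},\dots,u_n\rangle,
\]
where the $a$-th summand is indexed by the ordered splitting $\Q^n=\langle u_1,\dots,u_a\rangle\oplus\langle u_{a+1},\dots,u_n\rangle$, a point of $\GL_n\Z/(\GL_a\Z\times\GL_b\Z)$. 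Applying $H_*(\GL_n\Z;-)$ together with Shapiro's lemma and the Künneth theorem turns this into
\[
H_*(\GL_n\Z;\St_n\Q)\longrightarrow\bigoplus_{a+b=n}H_*(\GL_a\Z;\St_a\Q)\otimes H_*(\GL_b\Z;\St_b\Q),
\]
with counit the projection to the $n=0$ summand. A preliminary technical point is to check that co-concatenation is well defined on $\St_n\Q$, i.e.\ that it respects the relations among apartment classes and not merely their spanning set; I would secure this by building the map at the level of the Solomon--Tits chain complex. Because each summand remembers an actual complementary subspace rather than only a quotient, the target is induction from the Levi $\GL_a\Z\times\GL_b\Z$, so Shapiro's lemma yields exactly the tensor product of lower homology groups with no contribution from a unipotent radical. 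Coassociativity and counitality then reduce to the evident identities for iterated co-concatenation of frames.

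The heart of the argument, and the step I expect to be the main obstacle, is the bialgebra compatibility: that the coproduct is an algebra homomorphism. This is a Mackey-type identity comparing ``concatenate two frames, then split the result in all ways'' with ``split each frame, then concatenate the pieces,'' and it rests on a double-coset decomposition of $\GL_{a+b}\Z$ by the two block subgroups together with the behaviour of \eqref{Steinbergmult} on each double coset. I would first prove it as an identity of $\GL$-equivariant maps of Steinberg modules and only afterward apply the homology functor, so that the verification is purely combinatorial; the subtlety relative to the classical symmetric-group computation is that the intervening subgroups are parabolic, so one must check that only the split double cosets survive.

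Finally, the resulting bigraded bialgebra is connected for the rank grading, since $\bigoplus_k H_k(\GL_0\Z;\St_0\Q)=\Q$ and the coproduct preserves rank; a connected graded bialgebra admits a unique antipode, defined by induction on rank, so $H$ is a graded-commutative Hopf algebra. The determinant twist is compatible with concatenation and co-concatenation of frames, since the determinant of a block-diagonal matrix is the product of the blocks' determinants, so the identical argument shows that $H^{\det}$ is a graded-commutative Hopf algebra as well.
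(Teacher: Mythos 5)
The fatal step is exactly the one you deferred as a ``preliminary technical point'': the equivariant co-concatenation map you posit does not exist on $\St_n\Q$ with the target you chose. First, your formula splits the frame only at consecutive positions, so it is already incompatible with antisymmetry: $\langle u_1,u_2\rangle=-\langle u_2,u_1\rangle$, but the two images land in \emph{different} summands of the induced module (indexed by the splittings $(\Q u_1,\Q u_2)$ and $(\Q u_2,\Q u_1)$), so they cannot agree; this part could be repaired by summing over all $(a,b)$-shuffles with signs. Second, and irreparably for your target: the second tensor factor must live on the \emph{quotient} $\Q^n/U$, not on a complementary subspace. In rank $2$, write $L_i=\Q u_i$, $w=u_1+u_2$, $L_w=\Q w$; since an apartment class is the difference of the two vertices of its apartment, $\langle u_1,u_2\rangle=\langle u_1,w\rangle+\langle w,u_2\rangle$ holds in $\St_2\Q$. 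Even the fully shuffled co-concatenation sends the left side to terms supported on the splittings $(L_1,L_2),(L_2,L_1)$ and the right side to terms supported on $(L_1,L_w),(L_w,L_1),(L_w,L_2),(L_2,L_w)$; the summands of $\Ind_{\GL_1\Z\times\GL_1\Z}^{\GL_2\Z}(\St_1\otimes\St_1)$ are linearly independent, so no cancellation is possible and the map is not well defined. If instead the second factor is taken in $\St(\Q^n/U)$ --- so a summand is indexed by the subspace $U$ alone, i.e.\ induction from the parabolic rather than the Levi --- the offending terms collapse (all of $\bar u_1,\bar u_2,\bar w$ give the canonical generator of $\St_1(\Q^2/L)$) and the relation is respected. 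So ``building the map at the level of the Solomon--Tits chain complex'' cannot rescue your stated target; well-definedness forces the quotient form.

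This is how the paper actually proceeds, and moreover it works at the chain level of a flat resolution rather than on $\St_n$ itself: the coproduct is defined on (coinvariant) sharblies $[v_1,\dots,v_{n+k}]$ as a signed sum over \emph{all} subspaces $U$ spanned by subsets of the $v_i$ (the shuffle sum you omitted), with first factor in $\Sh^\chi_*(U)$ and second in $\Sh^\chi_*(F^n/U)$. Your worry about a unipotent radical is then moot --- the unipotent radical of $P_{a,b}$ acts trivially on the quotient factor, so the parabolic coinvariants automatically factor through the Levi --- which means the complementary subspace buys you nothing while destroying well-definedness. The Leibniz rule, coassociativity, and the bialgebra compatibility are then verified by hand; your ``Mackey-type identity'' appears in the elementary form that each $W\in\mathcal{U}_{xy}$ decomposes uniquely as $\phi(U)\oplus\psi(V)$ with $U\in\mathcal{U}_x$, $V\in\mathcal{U}_y$, and the final step (connected graded bialgebra has an antipode, via Milnor--Moore) matches the paper. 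One secondary error: your commutativity sign $(-1)^{ij+ab}$ is not the Koszul sign for the total grading --- the paper's chain computation gives $(-1)^{(n+k)(m+l)}$, and e.g.\ for bidegrees $(n,k)=(1,0)$, $(m,l)=(1,1)$ your sign is $-1$ while the correct one is $+1$; graded commutativity with respect to the total degree is precisely what the later application of Leray's theorem requires.
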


Using a theorem of Leray, we deduce the following.

\begin{corollary}\label{introcor}
$H$ and $H^{\det}$  are free graded commutative algebras. 
\end{corollary}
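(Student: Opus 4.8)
The plan is to invoke the structure theorem for connected graded-commutative Hopf algebras over a field of characteristic zero, attributed to Leray (and appearing in various forms due to Hopf, Borel, and Milnor--Moore): any such Hopf algebra is free as a graded-commutative algebra. By \autoref{H(GL)}, both $H$ and $H^{\det}$ are graded-commutative Hopf algebras over $\Q$, so it remains only to check the connectivity hypothesis and to make sure the two gradings present here interact correctly.

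First I would isolate the two relevant gradings. There is the homological degree $k$, which supplies the Koszul signs governing graded-commutativity, and there is the rank $n$, which the Steinberg product~\eqref{Steinbergmult} increases additively (and which the dual coproduct decreases). I would treat $k$ as an internal grading, regarding $H$ as a Hopf algebra object in the symmetric monoidal category of $\Z$-graded $\Q$-vector spaces equipped with the Koszul sign rule, and use $n$ as the grading that witnesses connectivity. Concretely, $H = \bigoplus_{n \ge 0} H(n)$ with $H(n) = \bigoplus_k H_k(\GL_n \Z; \St_n \Q)$, the product raises $n$ additively, and the augmentation is projection onto $n = 0$.

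Second, I would verify connectivity. The $n=0$ summand is $H_0(\GL_0 \Z; \St_0 \Q)$; since $\GL_0 \Z$ is trivial and $\St_0 \Q \cong \Q$, this summand is one-dimensional and is identified with the ground field by the unit and counit. Hence $H$, and likewise $H^{\det}$, is connected with respect to the $n$-grading. Moreover each bigraded piece $H_k(\GL_n \Z; \St_n \Q)$ is finite-dimensional, as it computes (a summand of) the finite-dimensional cohomology of $\SL_n \Z$ via Borel--Serre, so $H$ is of finite type. With these hypotheses in hand, Leray's theorem yields that $H$ and $H^{\det}$ are free graded-commutative algebras, proving the corollary.

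The main obstacle I anticipate is the bookkeeping between the two gradings, since the cleanest statement of Leray's theorem is for a single connected grading, whereas here connectivity lives in $n$ while the commutativity signs live in $k$. I would resolve this by applying the theorem in its symmetric-monoidal form, valid for connected graded-commutative Hopf algebras internal to any $\Q$-linear symmetric monoidal category: the $k$-grading is absorbed into the ambient category of graded vector spaces, and only the $n$-grading is used for the augmented/connected structure and the induction on $n$ that underlies the proof of freeness.
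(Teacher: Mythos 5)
Your overall strategy is the paper's: combine \autoref{H(GL)} with Leray's theorem (\autoref{Leray}, quoted from Milnor--Moore) for connected graded commutative Hopf algebras over a characteristic-zero field. However, your resolution of the two-gradings issue contains a genuine error: you assert that the Koszul signs are governed by the homological degree $k$ alone, with the rank $n$ serving only as a connectivity grading. In fact, by \autoref{commutative} the commutativity sign is $(-1)^{(n+k)(m+l)}$, i.e.\ it is governed by the \emph{total} degree $n+k$, not by $k$. Consequently $H$ is \emph{not} a commutative algebra object in your proposed symmetric monoidal category of $k$-graded vector spaces with Koszul signs: for instance $t_1$ has bidegree $(1,0)$ and $t_5$ has bidegree $(5,10)$, so $t_1t_5 = -\,t_5t_1$ by \autoref{commutative}, whereas your category would demand $t_1t_5 = +\,t_5t_1$; since $t_1t_5 \neq 0$ (it is a nonzero product of free generators by \autoref{wheelclasses}, visible in \autoref{table} at $(n,k)=(6,10)$), the commutativity hypothesis of the internal form of Leray's theorem fails. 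Worse, the conclusion would be false as you state it: in your framework $t_1$ (with $k=0$) would be an even generator, so freeness would force $t_1^m \neq 0$ for all $m$, contradicting $t_1^2 = 0$, which follows from $t_1$ having odd total degree $1$ and is reflected in the exterior factor $\bigwedge[t_1,t_5,t_9,\dots]$ of \autoref{wheelclasses}.

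The repair is simpler than your proposal and is what the paper does: collapse the bigrading to the single total grading $m = n+k$ (the grading $B^\chi_m = \bigoplus_{n+k=m} B^\chi_{n,k}$ already introduced in Section 2). With respect to this one grading, $H$ and $H^{\det}$ are graded commutative Hopf algebras by \autoref{hopfalg}, and they are connected because $n+k=0$ forces $(n,k)=(0,0)$ and $H_0(\GL_0\Z;\St_0\Q) \cong \Q$ (your connectivity verification transfers verbatim). Then \autoref{Leray} applies directly, with no internalization and no finite-type hypothesis needed (Milnor--Moore's Theorem 7.5 does not require it in characteristic zero). Your instinct that connectivity must be checked was correct and that check is right; only the identification of which grading carries the signs needs to be fixed.
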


Our freeness result lets us easily produce new nonzero classes in the cohomology of $\SL_n \Z$ and $\GL_n \Z$ simply by multiplying classes that were previously known to be nonzero. Moreover, using elementary results on Hopf algebras, one can check algebraic independence of cohomology classes using the coproduct. We apply our theory to the so-called wheel classes. Brown--Schnetz \cite[Theorem 4(i)]{Brown24} showed that there are nonzero classes 
\[w_n \in H_n(\GL_n \Z; \St_n \Q) \cong H^{\binom n2 - n}(\GL_n \Z) \subset H^{\binom n2 - n}(\SL_n \Z) \]
for every odd $n\ge 3$. For odd $n\ge1$, let
\[t_n \in H_{\binom n2}(\GL_n \Z; \St_n \Q)  \cong H^0(\GL_n \Z)\]
be a generator of the one-dimensional vector space. (It turns out that $t_3$ and $w_3$ both generate the one-dimensional vector space $H_3(\GL_3\Z; \St_3\Q)$.) And let, for even $n \ge 2$,
\[t_n \in H_{\binom n2}(\GL_n \Z;\St_n \Q\otimes \det)  \cong H^0(\GL_n \Z)\]
be a generator of the one-dimensional vector space.

\begin{atheorem}\label{wheelclasses}
The free graded commutative algebra 
\[ \bigwedge[t_1,t_5,t_9,\dots] \otimes \Sym[t_7,t_{11},\dots] \otimes \Sym[w_3,w_5,\dots]\]
 embeds as an algebra into $H$ and the free graded commutative algebra
\[ \bigwedge[t_2,t_6,t_{10} ,\dots] \otimes \Sym[t_4,t_8,\dots]\] 
embeds as an algebra into $H^{\det}$.
\end{atheorem}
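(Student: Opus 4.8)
The plan is to bootstrap off \autoref{introcor}. Since $H$ is a free graded-commutative algebra, it is the free graded-commutative algebra on its bigraded module of indecomposables $Q(H) := \overline{H}/\overline{H}^2$, where $\overline{H}$ is the augmentation ideal. Consequently, for a locally finite family of homogeneous classes it suffices to check that their images in $Q(H)$ are linearly independent: lifting such a family to part of a homogeneous basis of $Q(H)$ exhibits the free graded-commutative algebra on them as a tensor factor of $H$, hence as an embedded subalgebra, with the exterior/symmetric split dictated automatically by the parity of each generator. So the entire theorem reduces to (i) computing the parity of each $t_n$ and $w_n$, and (ii) proving their images in $Q(H)$ are linearly independent.

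For (i), each class lies in a bidegree $(k,n)$ recording its homological degree $k$ and its $\GL_n$-index $n$, and the graded-commutativity sign coming from the symmetry of the Steinberg product recorded in \autoref{H(GL)} is governed by $k+n \bmod 2$. Thus $t_n \in H_{\binom n2}(\GL_n\Z;\St_n\Q)$ has parity $\binom n2 + n = \binom{n+1}2 \bmod 2$, which is odd exactly when $n \equiv 1,2 \pmod 4$, whereas $w_n \in H_n(\GL_n\Z;\St_n\Q)$ has parity $2n \equiv 0$ and is therefore even. This is precisely the stated split: in $H$ the $t_n$ with $n\equiv 1 \pmod 4$ are exterior and those with $n\equiv 3\pmod4$ are symmetric (the family starting at $t_7$, since $t_3$ and $w_3$ span the same line), while every $w_n$ is symmetric; in $H^{\det}$ the $t_n$ with $n\equiv2\pmod4$ are exterior and those with $n\equiv0\pmod4$ symmetric.

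For (ii) I would first observe that the chosen generators occupy pairwise distinct bidegrees: distinct indices give distinct $\GL_n$-gradings, and for a fixed $n$ the classes $t_n$ and $w_n$ share a bidegree only if $\binom n2 = n$, i.e.\ $n=3$, which is excluded because $t_3=w_3$. Linear independence in $Q(H)$ therefore reduces to showing each generator is individually indecomposable. For the $t_n$ this is clean: I claim each is primitive. The reduced coproduct $\bar\Delta(t_n)$ has components in $H_i(\GL_p\Z;\St_p\Q)\otimes H_j(\GL_q\Z;\St_q\Q)$ with $p,q\ge1$, $p+q=n$ and $i+j=\binom n2$, but $H_*(\GL_p\Z;\St_p\Q)$ vanishes above degree $\binom p2$ (Borel--Serre duality, as $\mathrm{vcd}\,\GL_p\Z = \binom p2$), and $\binom p2+\binom q2 = \binom n2 - pq < \binom n2$, so every such component vanishes. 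A nonzero primitive is indecomposable. The identical argument, now with the $\det$-twisted coefficients, shows every generator of $H^{\det}$ is primitive; since those are all top-degree $t_n$ classes, the $H^{\det}$ half of the theorem follows at once.

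The main obstacle is the indecomposability of the wheel classes $w_n$ in $H$, where the top-degree argument fails: products such as $t_1\cdot c$ with $c \in H_n(\GL_{n-1}\Z;\St_{n-1}\Q)$ already land in bidegree $(n,n)$, and the unstable groups $H_j(\GL_q\Z;\St_q\Q)$ need not vanish for large $n$, so $\bar\Delta(w_n)$ can a priori have nonzero cross terms. I would establish that $w_n$ is nonetheless indecomposable by matching our coproduct with the coproduct on the commutative graph complex under the Brown--Schnetz model of the wheel classes, in which $w_n$ is carried by the connected wheel graph and connected graphs are primitive; as an alternative I would argue directly that $w_n\notin\overline{H}^2$ by pairing against $\bar\Delta$ and using the known vanishing ranges of $H^*(\SL_q\Z;\Q)$ to annihilate every potential decomposable contribution in bidegree $(n,n)$. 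Granting that each $w_n$ is indecomposable, the distinct-bidegree observation gives linear independence of all the listed classes in $Q(H)$ with the correct parities, and freeness of $H$ then embeds $\bigwedge[t_1,t_5,t_9,\dots]\otimes\Sym[t_7,t_{11},\dots]\otimes\Sym[w_3,w_5,\dots]$ as a subalgebra, completing the proof.
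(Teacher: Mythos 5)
Your framework is the paper's own: reduce via \autoref{introcor} (Leray, \autoref{Leray}) to linear independence of the generators' images in $Q(H)$, verify it by exhibiting the classes as nonzero primitives and invoking the injectivity of $P(H)\to Q(H)$ (\autoref{inj}), with the exterior/symmetric split read off from the total degree $n+k \bmod 2$ of \autoref{commutative}. Your treatment of the $t_n$ is exactly \autoref{primitivetrivial}: Borel--Serre vanishing of $H_k(\GL_p\Z;\St_p\Q\otimes\chi)$ for $k>\binom p2$ together with $\binom p2+\binom q2<\binom{p+q}2$ for $p,q\ge1$ kills every cross term of $\bar\Delta(t_n)$, and your parity and distinct-bidegree bookkeeping are correct. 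So the $H^{\det}$ half of the theorem is complete in your write-up.

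The genuine gap is the step you yourself flag as the main obstacle: the primitivity (or indecomposability) of the wheel classes $w_n$, which is where the content of the $H$ half lives, and neither of your two proposed strategies is carried out or would be straightforward to complete. The graph-complex route presupposes a map of coalgebras from $H$ (or from the chain-level bialgebra $B$) to the commutative graph complex compatible with the Brown--Schnetz identification; no such compatibility is established in the paper or in the references it cites, and building it is a substantial project in itself. The direct pairing route fails because the potential cross terms of $\bar\Delta(w_n)$ lie in $H_i(\GL_p\Z;\St_p\Q)\otimes H_j(\GL_q\Z;\St_q\Q)$ with $p+q=n$, $i+j=n$, $i\le\binom p2$, $j\le\binom q2$; for $n$ large these unstable groups are simply unknown, so no ``known vanishing ranges of $H^*(\SL_q\Z;\Q)$'' can annihilate them. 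The paper closes this gap by a short \emph{chain-level} computation (\autoref{primitivewheels}): $w_n$ has the explicit sharbly representative $[e_1,\dots,e_n,e_1-e_2,\dots,e_{n-1}-e_n,e_n-e_1]_{\GL_n(\Z)}$, and for every nonzero proper $U\in\mathcal U_x$ the factor $x''_U$ vanishes --- if no $e_i$ lies in $U$ then some $e_i-e_{i+1}\in U$, so $e_i$ and $e_{i+1}$ become equal modulo $U$ and the sharbly is degenerate; if some $e_i\in U$, choose $k$ with $e_1,\dots,e_k\in U$ and $e_{k+1}\notin U$, and then $e_k-e_{k+1}$ and $e_{k+1}$ agree up to sign modulo $U$, again a degeneracy. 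Hence $\Delta(w_n)=w_n\otimes1+1\otimes w_n$ already in $B$, and since Brown--Schnetz show $w_n\ne0$ in homology, it is a nonzero primitive. Some such concrete computation (or a genuinely new argument) is needed to make your proof complete; as written, the algebraic independence of the $w_n$ rests on an unproven assertion.
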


The cohomology $H^*(\SL_n(\Z))$ has been completely computed for $n \leq 7$ (see \cite{soule,Horozov,EVGS}). Using \autoref{wheelclasses}, we describe bases of the vector spaces $H^k(\SL_n(\Z))$ for $n \leq 7$ in terms of the wheel classes, trivial classes, and the Borel classes $\sigma_{5} \in H^5(\SL_6 \Z)$, $\sigma_5\in H^5(\SL_7 \Z)$, and $\sigma_9 \in H^9(\SL_7 \Z)$, which are known to be nonzero (see \cite{Bor, Franke, BenaSanderJeff, Brown23}):

\begin{figure}[h!]

\begin{multicols}{2}

\begin{tikzpicture}
		\begin{scope}[scale=.8]
			\def\WW{.8}
			\def\HH{.7}
			\clip (-.75,-.75) rectangle ({\WW*7+.97},22.5*\HH);
			\draw (-.5,0)--({\WW*7+0.3},0) node[right] {$n$};
			\draw (0,-1) -- (0,21.3*\HH) node[above] {$k$};
			
			\begin{scope}
				\foreach \s in {0,...,21}
				{
					\draw [dotted] (-.5,\s*\HH)--({\WW*7+0.25},\s*\HH);
					\node [fill=white] at (-.25,\s*\HH) [left] {\tiny $\s$};
				}

				\foreach \s in {0,...,7}
				{
					\draw [dotted] ({\WW*\s},-0.5*\HH)--({\WW*\s},21.5*\HH);
					\node [fill=white] at ({\WW*\s},-.5) {\tiny $\s$};					
				}
			\end{scope}

\node [fill=white] at ({0*\WW},0) {$\Q$};
\node [fill=white] at ({1*\WW},0) {$\Q$};
\node [fill=white] at ({2*\WW},1*\HH) {$\Q$};
\node [fill=white] at ({3*\WW},3*\HH) {$\Q$};
\node [fill=white] at ({4*\WW},3*\HH) {$\Q$};
\node [fill=white] at ({4*\WW},6*\HH) {$\Q$};
\node [fill=white] at ({5*\WW},5*\HH) {$\Q$};
\node [fill=white] at ({5*\WW},10*\HH) {$\Q$};
\node [fill=white] at ({6*\WW},5*\HH) {$\Q$};
\node [fill=white] at ({6*\WW},6*\HH) {$\Q$};
\node [fill=white] at ({6*\WW},7*\HH) {$\Q$};
\node [fill=white] at ({6*\WW},10*\HH) {$\Q^2$};
\node [fill=white] at ({6*\WW},15*\HH) {$\Q$};
\node [fill=white] at ({7*\WW},6*\HH) {$\Q$};
\node [fill=white] at ({7*\WW},7*\HH) {$\Q$};
\node [fill=white] at ({7*\WW},12*\HH) {$\Q$};
\node [fill=white] at ({7*\WW},16*\HH) {$\Q$};
\node [fill=white] at ({7*\WW},21*\HH) {$\Q$};


		\end{scope}
	\end{tikzpicture}

\begin{tikzpicture}
		\begin{scope}[scale=.8]
			\def\WW{1}
			\def\HH{.7}
			\clip (-.75,-.75) rectangle ({\WW*7+.97},22.5*\HH);
			\draw (-.5,0)--({\WW*7+0.3},0) node[right] {$n$};
			\draw (0,-1) -- (0,21.3*\HH) node[above] {$k$};
			
			\begin{scope}
				\foreach \s in {0,...,21}
				{
					\draw [dotted] (-.5,\s*\HH)--({\WW*7+0.25},\s*\HH);
					\node [fill=white] at (-.25,\s*\HH) [left] {\tiny $\s$};
				}

				\foreach \s in {0,...,7}
				{
					\draw [dotted] ({\WW*\s},-0.5*\HH)--({\WW*\s},21.5*\HH);
					\node [fill=white] at ({\WW*\s},-.5) {\tiny $\s$};					
				}
			\end{scope}

\node [fill=white] at ({0*\WW},0) {$1$};
\node [fill=white] at ({1*\WW},0) {$t_1$};
\node [fill=white] at ({2*\WW},1*\HH) {$t_2$};
\node [fill=white] at ({3*\WW},3*\HH) {$w_3$};
\node [fill=white] at ({4*\WW},3*\HH) {$w_3t_1$};
\node [fill=white] at ({4*\WW},6*\HH) {$t_4$};
\node [fill=white] at ({5*\WW},5*\HH) {$w_5$};
\node [fill=white] at ({5*\WW},10*\HH) {$t_{5}$};
\node [fill=white] at ({6*\WW},5*\HH) {$w_{5}t_{1}$};
\node [fill=white] at ({6*\WW},6*\HH) {$w_{3}^2$};
\node [fill=white] at ({6*\WW},7*\HH) {$t_4t_2$};
\node [fill=white] at ({6*\WW},10*\HH) {$t_{5}t_{1}, \sigma_5$};
\node [fill=white] at ({6*\WW},15*\HH) {$t_6$};
\node [fill=white] at ({7*\WW},6*\HH) {$w_{3}^2t_{1}$};
\node [fill=white] at ({7*\WW},7*\HH) {$w_{7}$};
\node [fill=white] at ({7*\WW},12*\HH) {$\sigma_9$};
\node [fill=white] at ({7*\WW},16*\HH) {$\sigma_5$};
\node [fill=white] at ({7*\WW},21*\HH) {$t_{7}$};


		\end{scope}
	\end{tikzpicture}

\end{multicols}

\caption{$H_k(\SL_n \Z; \St_n \Q)$}
\label{table}

\end{figure}

\begin{remark}
Ash \cite{Ash24} recently proved that $t_1$ and $w_3$ generates a free graded commutative algebra inside of $H$. His paper inspired this collaboration and our coproducts on $H$ and $H^{\det}$ come from his product on cosharblies.
\end{remark}

\begin{remark}

After a draft of this paper had been circulated, we learned of a forthcoming paper by F. Brown, M. Chan, S. Galatius, and S. Payne where they independently constructed a commutative Hopf algebra structure on $H$ and use it to deduce many properties of the cohomology of $\SL_n(\Z$).

\end{remark}

\begin{remark}
The bialgebra structure on $H$ is the shadow of a bialgebra structure on the Steinberg modules themselves. Steinberg modules assemble to form a representation of the groupoid $\bigsqcup_n \GL_n(\Z)$. This category of representations carries two monoidal structures, one often called the Day convolution product considered for example in \cite{MNP} and one based on parabolic induction considered for example by Nagpal \cite{VIpart1}. These monoidal structures are compatible in the sense that they form a duoidal category. The Steinberg modules form an algebra with respect to the first product, a coalgebra with respect to the second product, and bialgebra in a duoidal sense. For the definitions of a duoidal category and a bialgebra therein, see B\"ohm--Chen--Zhang \cite{duoidalcat}. To keep this paper less abstract, we will only work with classical Hopf algebras. However, we plan to revisit these ideas to study the unstable cohomology of congruence subgroups.

\end{remark}

\begin{remark}
Free rings are determined by their indecomposables so it is natural to ask for a description of the indecomposables of $H$. It follows from \cite[Remark 1.6]{MPW} that these indecomposables are isomorphic to the reduced equivariant homology of Rognes' common basis complex \cite{Rog}.

\end{remark}

\subsection*{Acknowledgements}

We would like to thank Francis Brown, Benjamin Br\"uck, Melody Chan, S\o{}ren Galatius, Rohit Nagpal, Sam Payne, Andrew Snowden, and Robin Sroka for helpful conversations.

\section{A differential graded bialgebra that computes Steinberg homology}

Let $R$ be a PID and let $F$ be its field of fractions. Further, let $K$ be our coefficient ring, a field of characteristic zero.

The \emph{Tits building} of $F^n$ denoted by $\T_n(F)$ is the simplicial complex whose vertices are nonzero proper subspaces $0 \subsetneq V \subsetneq F^n$ and $\{V_0, \dots, V_p\}$ is a simplex if
\[ V_0 \subset \dots \subset V_p.\]

The Solomon--Tits Theorem \cite{Solomon} implies that $\T_n(F)$ is a wedge of $(n-2)$-dimensional spheres and its only nonzero reduced homology group is called the Steinberg module
\[ \St_n(F) := \tilde H_{n-2}(\T_n(F); K).\]
By convention, we define $\St_0(F)$ to be $K$.

Lee--Szczarba \cite{LS} gave a flat $K\GL_n(R)$-resolution of $\St_n(F)$. We use a slight variant of it here, which can be established with the same proof. See also Ash--Gunnells--McConnell \cite{AGM12}. Let $V_n$ be the set of all nonzero vectors in $R^n\subset F^n$ and $V^\pm_n$ be the quotient of $V_n$ identifying negatives. By abuse of notation, we denote an element of $V^\pm_n$ simply by its representative $v\in R^n$. Let $X_n$ be the simplicial complex whose vertices are the elements of $V^\pm_n$ and all (finite) sets of vertices are simplices. Let $L_n$ the subcomplex of all simplices $\{v_0,\dots, v_p\}$ such that the $F$-span of $v_0, \dots, v_p$ is not all of $F^n$. Clearly, $X_n$ is contractible and using Lee--Szczarba's argument it follows that $L_n$ is homotopy equivalent to $\T_n(F)$, which implies that
\[ H_i(X_n,L_n;K) \cong \tilde H_{i-1}(L_n; K) \cong \begin{cases} \St_n(F) & i = n-1\\ 0 &i \neq n-1\end{cases}.\]
Because $C_i(X_n,L_n;K) = 0$ for $i< n-1$, we get an exact sequence
\[ \dots \longrightarrow C_n(X_n,L_n;K) \longrightarrow C_{n-1}(X_n,L_n;K) \longrightarrow \St_n(F) \longrightarrow 0.\]
\cite[Lemma 3.2]{CP} implies that $C_i(X_n,L_n;K)$ is a flat $K\GL_n(R)$-module.

We will use the notation
\[ \Sh_{n,k} = C_{n+k-1}(X_n,L_n;K)\]
and refer to $\Sh_{n,*} \to \St_n(F)$ as the \emph{Sharbly resolution} of $\St_n(F)$. Given nonzero vectors $v_1, \dots, v_{n+k}\in R^n$, we get a canonical element
\[ [v_1, \dots, v_{n+k}] \in \Sh_{n,k}\]
given by the corresponding simplex $\{v_1, \dots, v_{n+k}\}$ in $X_n$. These elements are nonzero if and only if the $F$-span of $v_1, \dots, v_{n+k}$ is  $F^n$. We call such an element then a \emph{basic sharbly}.

Let $\chi\colon R^\times \to K^\times$ be a character (i.e. a group homomorphism), which we will view as a one-dimensional $K\GL_n(R)$-module that factors through the determinant. We will use the notation
\[ [v_1, \dots, v_{n+k}]^\chi := [v_1, \dots, v_{n+k}] \otimes 1 \in \Sh^\chi_{n,k}:= \Sh_{n,k} \otimes_K \chi.\]

We will further consider the $\GL_n(R)$-coinvariants of $\Sh^\chi_{n,k}$ and call its elements \emph{coinvariant sharblies}. In particular, we will denote the image of a basic sharbly $ [v_1, \dots, v_{n+k}]^\chi\in \Sh^\chi_{n,k}$ by
\[ [v_1, \dots, v_{n+k}]^\chi_{\GL_n(R)} \in (\Sh^\chi_{n,k})_{\GL_n(R)}\]
and call it a \emph{basic coinvariant sharbly}. Because $\GL_n(R)$ acts on the set of all basic sharblies, which up to signs%
\footnote{``Up to signs'' means that while $[v_1, \dots, v_{n+k}]^\chi$ is antisymmetric, if we choose a representative from each $S_{n+k}$-orbit of $(n+k)$-tuples of vectors, we get a basis of $\Sh^\chi_{n,k}$.
}
 form a basis of $\Sh^\chi_{n,k}$, the set of nonzero basic coinvariant sharblies form a basis up to signs. 

Let us denote by
\[ B^\chi_{n,k} :=  (\Sh^\chi_{n,k})_{\GL_n(R)}\quad\text{and}\quad B^\chi_m := \bigoplus_{n+k = m} B^\chi_{n,k}\]
the coinvariants of the sharblies and by
\[ B^\chi :=  \bigoplus_{m\ge0} B^\chi_m = \bigoplus_{n,k\ge0} B^\chi_{n,k}\]
the (bi-)graded vector space of all coinvariant sharblies. It comes with a differential 
\begin{align*}
 \partial\colon B^\chi_{n,k} = (\Sh^\chi_{n,k})_{\GL_n(R)} &\longrightarrow (\Sh^\chi_{n,k-1})_{\GL_n(R)} = B^\chi_{n,k-1}\\
 [v_1,\dots, v_{n+k}]^\chi_{\GL_n(R)} &\longmapsto \sum_{i=1}^{n+k}  (-1)^i[v_1,\dots,\hat v_i,\dots, v_{n+k}]^\chi_{\GL_n(R)}
 \end{align*}
of degree $-1$, such that its homology is
\[ H(B^\chi)_{n,k} \cong H_k(\GL_n(R);\St_n(F) \otimes_K \chi).\]

\begin{definition}
We define 
\[ \nabla \colon B^\chi_{n,k} \otimes B^\chi_{m,l} \longrightarrow B^\chi_{n+m,k+l}\]
by sending a tensor product of basic coinvariant sharblies $x = [v_1, \dots, v_{n+k}]^\chi_{\GL_n(R)}$ and $y =  [w_1, \dots, w_{m+l}]^\chi_{\GL_m(R)}$ to
\[ \nabla(x \otimes y) = xy = [ \phi_{n,n+m}(v_1), \dots, \phi_{n,n+m}(v_{n+k}), \psi_{m,n+m}(w_1), \dots, \psi_{m,n+m}(w_{m+l})]^\chi_{\GL_{n+m}(R)},\]
where $\phi_{n,n+m}\colon R^n \to R^{n+m}$ sends the standard basis vector $e_i \in R^n$ to $e_i \in R^{n+m}$ and $\psi_{m,n+m} \colon R^m \to R^{n+m}$ sends $e_i\in R^m$ to $e_{n+i} \in R^{n+m}$.
\end{definition}

The following lemma is easy to check.

\begin{lemma}
$\nabla \colon B^\chi \otimes B^\chi \to B^\chi$ defines a (bi-)graded algebra.
\end{lemma}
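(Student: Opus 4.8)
The plan is to build the multiplication first at the level of the (untwisted, non-coinvariant) sharbly modules and then argue that it descends through the character twist and the coinvariants. Concretely, I would define
\[ \tilde\nabla\colon \Sh_{n,k}\otimes_K \Sh_{m,l}\longrightarrow \Sh_{n+m,k+l} \]
on basis elements by concatenation,
\[ [v_1,\dots,v_{n+k}]\otimes[w_1,\dots,w_{m+l}]\longmapsto [\phi_{n,n+m}(v_1),\dots,\phi_{n,n+m}(v_{n+k}),\psi_{m,n+m}(w_1),\dots,\psi_{m,n+m}(w_{m+l})]. \]
The first thing to verify is that this is well defined on the free $K$-modules $\Sh_{n,k}=C_{n+k-1}(X_n,L_n;K)$: since the $F$-span of the vectors $\phi_{n,n+m}(v_i)$ together with the $\psi_{m,n+m}(w_j)$ equals $F^{n+m}$ if and only if the $v_i$ span $F^n$ and the $w_j$ span $F^m$, the map carries basic sharblies to basic sharblies and non-spanning simplices to non-spanning simplices (hence to zero), so it respects the defining relations of the relative chains; antisymmetry in each tensor factor is matched by the antisymmetry of the concatenated bracket. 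The bigrading statement $(n,k)\otimes(m,l)\mapsto(n+m,k+l)$ is immediate from the formula.

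The main point, and the only step I expect to carry genuine content, is the descent of $\tilde\nabla$ to $\nabla$. Writing $\iota\colon \GL_n(R)\times\GL_m(R)\hookrightarrow\GL_{n+m}(R)$ for the block-diagonal inclusion, one checks directly from the definitions of $\phi_{n,n+m}$ and $\psi_{m,n+m}$ that $\tilde\nabla(g x\otimes h y)=\iota(g,h)\,\tilde\nabla(x\otimes y)$ for all $(g,h)$, because $\iota(g,h)$ fixes each block. Since $\det\iota(g,h)=\det(g)\det(h)$, the restriction of the one-dimensional module $\chi$ along $\iota$ is the external product $\chi\boxtimes\chi$, so $\tilde\nabla$ extends to a $(\GL_n(R)\times\GL_m(R))$-equivariant map $\Sh^\chi_{n,k}\otimes_K\Sh^\chi_{m,l}\to\Sh^\chi_{n+m,k+l}$, the target carrying the $\GL_{n+m}(R)$-action restricted along $\iota$. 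Using the canonical identification $(\Sh^\chi_{n,k})_{\GL_n(R)}\otimes_K(\Sh^\chi_{m,l})_{\GL_m(R)}\cong(\Sh^\chi_{n,k}\otimes_K\Sh^\chi_{m,l})_{\GL_n(R)\times\GL_m(R)}$ and the functoriality of coinvariants, $\tilde\nabla$ induces a map on $(\GL_n(R)\times\GL_m(R))$-coinvariants which, composed with the projection $(\Sh^\chi_{n+m,k+l})_{\GL_n(R)\times\GL_m(R)}\twoheadrightarrow(\Sh^\chi_{n+m,k+l})_{\GL_{n+m}(R)}$, is exactly $\nabla$. In particular $\nabla$ is independent of the chosen representatives $v_i,w_j$ (up to sign) and of the group elements used to represent the classes $x$ and $y$, which is precisely the required well-definedness.

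It then remains to check associativity and the existence of a unit, both of which are formal. Associativity reduces to the compatibilities $\phi_{n+m,n+m+p}\circ\phi_{n,n+m}=\phi_{n,n+m+p}$ and the analogous identities for the three block embeddings of $R^n$, $R^m$, $R^p$ into $R^{n+m+p}$: both $(xy)z$ and $x(yz)$ are the single bracket obtained by placing the three tuples of vectors in order into $R^{n+m+p}$, so they already agree at the level of representatives. The unit is the empty sharbly $[\,]^\chi_{\GL_0(R)}$ spanning $B^\chi_{0,0}\cong K$ (here $\Sh_{0,0}=C_{-1}(X_0,L_0;K)=K$ is generated by the empty simplex and $\GL_0(R)$ is trivial): concatenating the empty tuple on either side of $[v_1,\dots,v_{n+k}]^\chi$ returns the same bracket, so $[\,]^\chi_{\GL_0(R)}$ is a two-sided identity. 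Thus $\nabla$ makes $B^\chi$ into a bigraded associative unital $K$-algebra, and the only step that is not a pure formality is the equivariant descent of the previous paragraph, where the work is confined to the bookkeeping of the block inclusion $\iota$ and the multiplicativity of the determinant.
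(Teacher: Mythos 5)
Your proof is correct, and it follows exactly the route the paper intends: the paper states this lemma without proof (``easy to check''), and its definition of $\nabla$ on basic coinvariant sharblies via the block embeddings $\phi_{n,n+m}$, $\psi_{m,n+m}$ presupposes precisely the checks you carry out --- descent through the coinvariants via equivariance for the block-diagonal inclusion (using that $\chi$ factors through the determinant, so $\chi$ restricts to $\chi\boxtimes\chi$), strict associativity from the compatibility of the block embeddings, and the empty sharbly spanning $B^\chi_{0,0}\cong K$ as unit. Nothing is missing; your write-up simply makes explicit what the authors left to the reader.
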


We call $\chi\colon R^\times \to K^\times$ \emph{odd} if $\chi(-1) = -1$ and \emph{even} if $\chi(-1) =1$.

\begin{lemma}
If $\chi$ is odd and $n$ is odd, then $B^\chi_{n,k} =0$.
\end{lemma}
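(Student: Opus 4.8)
The plan is to exploit the scalar matrix $-\id \in \GL_n(R)$, whose determinant is $(-1)^n$. Since $-\id$ acts as the identity after passing to coinvariants while acting on the twisted coefficients by $\chi((-1)^n)$, I expect to force every basic coinvariant sharbly to equal its own negative, and hence to vanish in characteristic zero.

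Concretely, first I would recall that the vertices of $X_n$ are the elements of $V^\pm_n$, i.e.\ the nonzero vectors of $R^n$ taken up to sign. Thus the matrix $-\id$, which sends each $v_i$ to $-v_i$, fixes every vertex of $X_n$ and therefore fixes the oriented simplex: on the untwisted chains we have $(-\id) \cdot [v_1, \dots, v_{n+k}] = [-v_1, \dots, -v_{n+k}] = [v_1, \dots, v_{n+k}]$, with no intervening sign, since the vertices are listed in the same order and $-\id$ fixes each of them individually.

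Next I would track the effect of the twist. On $\Sh^\chi_{n,k} = \Sh_{n,k}\otimes_K \chi$ the group acts diagonally, so $g$ scales the $\chi$-factor by $\chi(\det g)$. Taking $g = -\id$ and using $\det(-\id) = (-1)^n = -1$ for odd $n$, together with the oddness hypothesis $\chi(-1) = -1$, I obtain
\[ (-\id) \cdot [v_1, \dots, v_{n+k}]^\chi = \chi\big((-1)^n\big)\, [v_1, \dots, v_{n+k}]^\chi = -\,[v_1, \dots, v_{n+k}]^\chi. \]
Finally, passing to the $\GL_n(R)$-coinvariants $B^\chi_{n,k}$ identifies $(-\id)\cdot x$ with $x$, so each basic coinvariant sharbly satisfies $x = -x$. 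As $K$ has characteristic zero this forces $x = 0$, and since the basic coinvariant sharblies span $B^\chi_{n,k}$ we conclude $B^\chi_{n,k} = 0$.

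There is no serious obstacle here; the only point requiring care is the bookkeeping of signs—namely verifying that $-\id$ contributes no sign from reordering vertices (it does not, because it fixes each vertex of $V^\pm_n$) so that the full sign contribution of $-x$ comes from the character $\chi$ via the determinant. Both hypotheses are genuinely used: the parity of $n$ to make $\det(-\id) = -1$, and the oddness of $\chi$ to make $\chi(-1) = -1$.
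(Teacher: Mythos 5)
Your proof is correct and follows essentially the same argument as the paper: act by the scalar matrix $-\id$, observe that it fixes every vertex of $X_n$ (since vertices lie in $V^\pm_n$, where $v$ and $-v$ are identified) and hence fixes each basic sharbly up to the twist, while the character contributes $\chi(\det(-\id)) = \chi(-1)^n = -1$, forcing $x = -x$ in the coinvariants. Your extra care in noting that no reordering sign arises is a fine (and accurate) elaboration of the step the paper leaves implicit.
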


\begin{proof}
Let $x=[v_1, \dots, v_{n+k}]^\chi$ be a basic sharbly in $\Sh^\chi_{n,k}$. Then
\[ (-\id)\cdot x = \chi(-1)^n \cdot [-v_1, \dots, -v_{n+k}]^\chi = - [v_1, \dots, v_{n+k}]^\chi = -x\]
and thus vanishes in the coinvariants $B^\chi_{n,k}$.
\end{proof}

\begin{lemma}\label{commutative}
$B^\chi$ is graded commutative, i.e. for $x\in B^\chi_{n,k}$ and $y\in B^\chi_{m,l}$, we get
\[ xy = (-1)^{(n+k)(m+l)} yx.\]
\end{lemma}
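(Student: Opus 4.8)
The plan is to compare the two basic coinvariant sharblies representing $xy$ and $yx$ directly, keeping track of the two places where they differ: the order in which the vectors are listed inside the bracket, and the coordinate blocks of $R^{n+m}$ into which the two groups of vectors are placed. Writing $x = [v_1,\dots,v_{n+k}]^\chi_{\GL_n(R)}$ and $y = [w_1,\dots,w_{m+l}]^\chi_{\GL_m(R)}$, the definition of $\nabla$ gives
\[ xy = [\phi_{n,n+m}(v_1),\dots,\phi_{n,n+m}(v_{n+k}),\psi_{m,n+m}(w_1),\dots,\psi_{m,n+m}(w_{m+l})]^\chi_{\GL_{n+m}(R)}, \]
so that the $v$'s occupy the first $n$ coordinates and the $w$'s the last $m$, whereas
\[ yx = [\phi_{m,n+m}(w_1),\dots,\phi_{m,n+m}(w_{m+l}),\psi_{n,n+m}(v_1),\dots,\psi_{n,n+m}(v_{n+k})]^\chi_{\GL_{n+m}(R)}, \]
where now the $w$'s occupy the first $m$ coordinates and the $v$'s the last $n$.

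First I would address the ordering inside the bracket. Since the sharbly bracket is antisymmetric, moving the length-$(n+k)$ block of $v$-entries past the length-$(m+l)$ block of $w$-entries multiplies the element by the sign of the corresponding block transposition, namely $(-1)^{(n+k)(m+l)}$. Hence $yx$ equals $(-1)^{(n+k)(m+l)}$ times the element with the $v$-entries listed first and the $w$-entries second, but still placed in the last $n$ and first $m$ coordinate blocks, respectively.

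Next I would account for the coordinate placement using the coinvariants. Let $g \in \GL_{n+m}(R)$ be the block permutation matrix sending $e_i \mapsto e_{m+i}$ for $1 \le i \le n$ and $e_{n+j}\mapsto e_j$ for $1\le j \le m$; it carries the coordinate blocks used in $xy$ onto those used in $yx$, i.e.\ $g\circ \phi_{n,n+m} = \psi_{n,n+m}$ and $g\circ\psi_{m,n+m} = \phi_{m,n+m}$. Because $\chi$ factors through the determinant, $g$ acts on $\Sh^\chi_{n+m,\cdot}$ by $\chi(\det g)$ times its permutation of the vectors, while in the $\GL_{n+m}(R)$-coinvariants the class of $g\cdot z$ equals the class of $z$. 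Applying this to the configuration of $xy$ therefore identifies $xy$ with $\chi(\det g)$ times the reordered representative of $yx$ from the previous step, yielding
\[ xy = \chi(\det g)\,(-1)^{(n+k)(m+l)}\, yx. \]

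It remains to show that the factor $\chi(\det g)$ is harmless, which is where the only real care is needed. The block transposition $g$ has $\det g = (-1)^{nm}$, so $\chi(\det g) = \chi(-1)^{nm}$. If $\chi$ is even this is $1$ and we are done. If $\chi$ is odd then $\chi(\det g) = (-1)^{nm}$, which equals $-1$ precisely when both $n$ and $m$ are odd; but in that case the preceding lemma forces $B^\chi_{n,k} = 0$, so $x = 0$ and the claimed identity holds trivially with both sides vanishing. In every case we obtain $xy = (-1)^{(n+k)(m+l)} yx$. The main obstacle is thus not any single computation but the bookkeeping of the two independent signs—one from antisymmetry of the bracket, one from the determinant of the basis-permuting element $g$—together with the observation that the single case in which the determinant sign could spoil the formula is exactly the case ruled out by the vanishing lemma.
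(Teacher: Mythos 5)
Your proof is correct and follows essentially the same route as the paper's: both split the comparison into the antisymmetry sign $(-1)^{(n+k)(m+l)}$ and the determinant sign of the block-swap matrix (your $g$ is the paper's $T_{n,m}$), and both dispose of the residual factor $\chi(-1)^{nm}$ by noting it is $1$ for even $\chi$, while for odd $\chi$ either $nm$ is even or the vanishing lemma makes both sides zero. No gaps.
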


\begin{proof}
Let $x = [v_1, \dots, v_{n+k}]^\chi_{\GL_n(R)}$ and $y =  [w_1, \dots, w_{m+l}]^\chi_{\GL_m(R)}$ be basic coinvariant sharblies. We observe that
\begin{multline*}xy = [ \phi_{n,n+m}(v_1), \dots, \phi_{n,n+m}(v_{n+k}), \psi_{m,n+m}(w_1), \dots, \psi_{m,n+m}(w_{m+l})]^\chi_{\GL_{n+m}(R)} \\= (-1)^{(n+k)(m+l)} [  \psi_{m,n+m}(w_1), \dots, \psi_{m,n+m}(w_{m+l}),\phi_{n,n+m}(v_1), \dots, \phi_{n,n+m}(v_{n+k})]^\chi_{\GL_{n+m}(R)}. \end{multline*}
Let $T_{n,m}$ be the permutation matrix sending $e_i$ to $e_{m+i}$ if $i\le n$ and to $e_{i-n}$ if $i>n$. We can use $T_{n,m}\in \GL_{n+m}(R)$ to swap $\phi$ and $\psi$ in the sense that 
\[ T_{n,m}\phi_{n,n+m} = \psi_{n,n+m}\quad\text{and}\quad T_{n,m}\psi_{m,n+m} = \phi_{m,n+m}.\]
The determinant of $T_{n,m}$ is $(-1)^{nm}$, so in particular $1$ if $n$ or $m$ are even. We then get that
\begin{multline*} [  \psi_{m,n+m}(w_1), \dots, \psi_{m,n+m}(w_{m+l}),\phi_{n,n+m}(v_1), \dots, \phi_{n,n+m}(v_{n+k})]^\chi_{\GL_{n+m}(R)} \\= T_{n,m} \chi(-1)^{nm}   [  \phi_{m,n+m}(w_1), \dots, \phi_{m,n+m}(w_{m+l}),\psi_{n,n+m}(v_1), \dots, \psi_{n,n+m}(v_{n+k})]^\chi_{\GL_{n+m}(R)} \\=T_{n,m} \chi(-1)^{nm} yx. \end{multline*}
Therefore 
\[ xy = (-1)^{(n+k)(m+l)} \cdot \chi(-1)^{nm} \cdot yx\]
in the coinvariants $B^\chi_{n+m,k+l}$. The factor $\chi(-1)^{nm}$ is of course $1$ if $\chi(-1)=1$, and it is also $1$ if $\chi(-1)=-1$ because then $nm$ is even or $xy=yx=0$.
\end{proof}

\begin{lemma}
$B^\chi$ together with the differential $\partial$ is a differential graded algebra.
\end{lemma}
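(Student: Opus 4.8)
The plan is to verify that $(B^\chi, \partial)$ satisfies the two defining properties of a differential graded algebra: that $\partial^2 = 0$, and that $\partial$ is a graded derivation for $\nabla$. The first is the usual simplicial identity for the alternating-sum boundary and is already implicit in the construction of $B^\chi$ (its homology was identified with $H_k(\GL_n(R); \St_n(F) \otimes_K \chi)$), so the real content is the graded Leibniz rule
\[ \partial(xy) = (\partial x)\, y + (-1)^{n+k}\, x\, (\partial y) \qquad \text{for } x \in B^\chi_{n,k},\ y \in B^\chi_{m,l}, \]
taken with respect to the total grading $m = n+k$, under which $\partial$ has degree $-1$. Since both $\partial$ and $\nabla$ are defined on representatives and are $\GL$-equivariant, it suffices to check this identity on a tensor product of basic coinvariant sharblies $x = [v_1, \dots, v_{n+k}]^\chi_{\GL_n(R)}$ and $y = [w_1, \dots, w_{m+l}]^\chi_{\GL_m(R)}$.

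First I would expand $\partial(xy)$ directly from the definitions. By definition of $\nabla$, the product $xy$ is the basic coinvariant sharbly on the concatenated list of $n+k+m+l$ vectors $\phi_{n,n+m}(v_1), \dots, \phi_{n,n+m}(v_{n+k}), \psi_{m,n+m}(w_1), \dots, \psi_{m,n+m}(w_{m+l})$. Applying $\partial$ produces the alternating sum over all single deletions, and I would split this sum into two blocks according to whether the deleted vector lies in the $\phi(v_i)$ part (positions $1 \le i \le n+k$) or in the $\psi(w_j)$ part (positions $n+k+j$ for $1 \le j \le m+l$).

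The first block carries signs $(-1)^i$ for $i = 1, \dots, n+k$ and is precisely the concatenation product of $\partial x = \sum_i (-1)^i [v_1, \dots, \hat v_i, \dots, v_{n+k}]^\chi_{\GL_n(R)}$ with $y$, so it equals $(\partial x)\, y$. In the second block, the term deleting $\psi(w_j)$ carries the sign $(-1)^{(n+k)+j}$, which factors as $(-1)^{n+k}(-1)^j$; pulling out the constant $(-1)^{n+k}$ leaves the alternating sum $\sum_j (-1)^j [\dots, \hat{w_j}, \dots]$, which reassembles as $x\,(\partial y)$. Hence the second block equals $(-1)^{n+k}\, x\,(\partial y)$, and summing the two blocks yields the Leibniz rule.

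This is essentially the standard fact that the simplicial boundary is a derivation for the concatenation of ordered simplices, so I do not expect a serious obstacle. The only points needing care are the sign bookkeeping—confirming that the correct Koszul degree is the total degree $n+k$ (the number of vectors) rather than the homological degree $k$—and checking that passing to $\GL_{n+m}(R)$-coinvariants does not disturb the identity. The latter is immediate, since $\partial$ and $\nabla$ are defined on $\Sh^\chi$ before taking coinvariants and commute with the quotient maps, so the identity descends verbatim.
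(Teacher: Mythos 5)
Your proposal is correct and follows essentially the same route as the paper: expand $\partial(xy)$ over the concatenated list of $n+k+m+l$ vectors, split the alternating sum into the block of $\phi(v_i)$-deletions (giving $(\partial x)y$) and the block of $\psi(w_j)$-deletions at positions $n+k+j$ (giving $(-1)^{n+k}x(\partial y)$ after factoring out $(-1)^{n+k}$), with the Koszul sign taken in the total degree $n+k$. Your added remarks that $\partial^2=0$ is the standard simplicial identity and that the computation descends to coinvariants are harmless elaborations of points the paper leaves implicit.
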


\begin{proof}
Given basic coinvariant sharblies $x = [v_1, \dots, v_{n+k}]^\chi_{\GL_n(R)}$ and $y =  [w_1, \dots, w_{m+l}]^\chi_{\GL_m(R)}$, we need to prove that
\[ \partial(xy) = (\partial x)y + (-1)^{n+k} x(\partial y).\]
This follows as
\begin{align*} &\partial(xy) \\= &\phantom{+}\sum_{i=1}^{n+k}  
(-1)^i
[ \phi_{n,n+m}(v_1), \dots,\widehat{ \phi_{n,n+m}(v_i)},\dots, \phi_{n,n+m}(v_{n+k}), \psi_{m,n+m}(w_1), \dots, \psi_{m,n+m}(w_{m+l})]^\chi_{\GL_{n+m}(R)} \\
&+ (-1)^{n+k} \sum_{i=1}^{m+l}  
(-1)^i
[ \phi_{n,n+m}(v_1),\dots, \phi_{n,n+m}(v_{n+k}), \psi_{m,n+m}(w_1), \dots,\widehat{ \psi_{m,n+m}(w_i)},\dots, \psi_{m,n+m}(w_{m+l})]^\chi_{\GL_{n+m}(R)}\\
= & \phantom{+}(\partial x)y + (-1)^{n+k} x(\partial y)\qedhere.
\end{align*}
\end{proof}

We now want to describe the coalgebra structure on $B^\chi$. We will actually first describe it before taking coinvariants. 

\begin{definition}
Let $U$ be an $a$-dimensional subspace of $F^n$. Let $X(U)$ be the simplicial complex whose vertices are 
\[ V^\pm(U) := \{ v \in V^\pm_n \mid v \in U\}\]
and every finite subset forms a face. Let $L(U)$ the subcomplex of $X(U)$ of all simplices $\{v_0, \dots, v_p\}$ such that the $F$-span of $v_0, \dots, v_p$ is not all of $U$.
We define 
\[\Sh^\chi_s(U):= C_{a+s-1}(X(U), L(U); K).\]
Note that a basis of the free $R$-module $U\cap R^n$ gives an isomorphism $\Sh^\chi_*(U) \cong \Sh^\chi_{a,*}$ of chain complexes.

Further, let $W$ be a $b$-dimensional subspace of $U$.
We define $\Sh^\chi_s(U/W)$ as the quotient of $\Sh^\chi_{s-b}(U)$ by identifying basic sharblies $[v_1, \dots, v_{a+(s-b)}]^\chi$ and $[w_1, \dots, w_{a+(s-b)}]^\chi$ in  $\Sh^\chi_{s-b}(U)$ if $v_i -w_i \in W$ for all $i$. We also identify a basic sharbly $[v_1, \dots, v_{a+(s-b)}]^\chi$ with zero if $v_i \in W$ for some $i$ or if $v_i-v_j \in W$ for some $i\neq j$. Note that a basis of the free $R$-module $(U\cap R^n)/(W\cap R^n)$ gives an isomorphism $\Sh^\chi_*(U/W) \cong \Sh^\chi_{a-b,*}$ of chain complexes.
\end{definition}

\begin{definition}
Let $W \subseteq U \subseteq F^n$ be subspaces of $F^n$. We define a map 
\[ \Delta\colon \Sh^\chi_{k}(U/W) \longrightarrow \bigoplus_{\substack{W\subseteq V \subseteq U\\p+q=k}} \Sh^\chi_{p}(V/W) \otimes \Sh^\chi_{q}(U/V)\]
in the following way. 

Let $x = [v_1, \dots, v_{\dim U - \dim W+k}]^\chi\in \Sh^\chi_{k}(U/W)$ be a basic sharbly. We define $\mathcal U_x$ to be the set of subspaces $V\subseteq U$ that are spanned by $W$ and a subset of $\{v_1, \dots, v_{\dim U- \dim W+k}\}$. ($W$ and $U$ are necessarily included in $\mathcal U_x$.) Let $[m]$ denote the set $\{1,\ldots,m\}$. For $V \in \mathcal U_x$, let 
\[S(V) = \{ i \in [\dim U - \dim W+k] \mid v_i \in V\}.\]
 Let $s_1<\dots <s_{\dim V - \dim W+p}$ be the elements of $S(V)$ and $t_1< \dots< t_{\dim U-\dim V +q}$ the elements of $[a+k] \setminus S(V)$. 

We set
\[ x'_{V} =  [ v_{s_1},\dots, v_{s_{\dim V - \dim W+p}}]^\chi \in \Sh^\chi_p(V/W)\]
and
\[x''_{V} =  [ v_{t_1},\dots, v_{t_{\dim U-\dim V +q}}]^\chi \in \Sh^\chi_q(U/V).\]

We define
\[ \Delta(x) = \sum_{V \in \mathcal U_x} (-1)^{\pi_{V/W\subset U/W}} x'_{V} \otimes x''_{V},\]
where $\pi_{V/W\subset U/W}$ is the shuffle permutation sending $i$ to $s_i$ if $i \le \dim V - \dim W+ p$ and to $t_{i-\dim W-p}$ if $i>\dim V - \dim W +p$. If $U=F^n$ and $W=0$, we will sometimes simply write $\pi_V$ for $\pi_{V/W \subset F^n/W}$.
\end{definition}

\begin{lemma}
The map
\[ \Delta\colon \Sh^\chi_{n,k} \longrightarrow \bigoplus_{\substack{U \subset F^n\\p+q=k}} \Sh^\chi_{p}(U) \otimes \Sh^\chi_{q}(F^n/U)\]
is $\GL_n(R)$-equivariant and the $\GL_n(R)$-coinvariants of it give a map
\[ \Delta\colon B^\chi_{n,k} \longrightarrow \bigoplus_{\substack{a+b=n\\p+q=k}} B^\chi_{a,p} \otimes B^\chi_{b,q}.\]
\end{lemma}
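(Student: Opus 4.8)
The plan is to first pin down the $\GL_n(R)$-action on the target, then verify equivariance by a direct computation, after which the descent to coinvariants becomes an orbit--stabilizer argument. I declare $\GL_n(R)$ to act on $\bigoplus_{U,\,p+q=k}\Sh^\chi_p(U)\otimes\Sh^\chi_q(F^n/U)$ by sending the summand indexed by $U$ to the one indexed by $gU$: on underlying sharblies $g$ acts by $[u_1,\dots]\otimes[w_1,\dots]\mapsto[gu_1,\dots]\otimes[gw_1,\dots]$, using the isomorphisms $\Sh_p(U)\cong\Sh_p(gU)$ and $\Sh_q(F^n/U)\cong\Sh_q(F^n/gU)$ induced by $g$, and on the two tensored copies of $\chi$ — which together span a one-dimensional space — it acts by the single scalar $\chi(\det g)$, the determinant taken on all of $F^n$. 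This choice is forced: for $g$ stabilizing $U$, multiplicativity of determinants along $0\subset U\subset F^n$ gives $\chi(\det g)=\chi(\det g|_U)\,\chi(\det g|_{F^n/U})$, so the global twist agrees with the product of the two ``local'' twists that $\Sh^\chi_p(U)$ and $\Sh^\chi_q(F^n/U)$ carry individually. (That $\Delta$ is well defined on $\Sh^\chi_{n,k}$, i.e.\ compatible with reordering the $v_i$, is immediate since a permutation of the inputs multiplies the term attached to each fixed $V$ by the same sign on both sides.)

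Equivariance is then bookkeeping. First I would observe that $\mathcal U_{gx}=\{gV\mid V\in\mathcal U_x\}$ and, crucially, that $S(gV)=\{i\mid gv_i\in gV\}=\{i\mid v_i\in V\}=S(V)$, so the index sets $s_1<\cdots$ and $t_1<\cdots$, and hence the shuffle sign $(-1)^{\pi_V}$, are unchanged when $x$ is replaced by $gx$. Since $g\cdot x=\chi(\det g)\,[gv_1,\dots,gv_{n+k}]^\chi$, expanding $\Delta(g\cdot x)$ term by term produces, in the $gV$-summand, exactly $\chi(\det g)\,(-1)^{\pi_V}[gv_{s_1},\dots]^\chi\otimes[gv_{t_1},\dots]^\chi$, which is precisely $g\cdot\big((-1)^{\pi_V}x'_V\otimes x''_V\big)$ under the action just described. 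Summing over $V$ gives $\Delta(g\cdot x)=g\cdot\Delta(x)$.

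For the descent to coinvariants the plan is an orbit--stabilizer computation, one dimension at a time. Every $V$ occurring in the image of $\Delta$ is spanned by integer vectors, hence rational, and $V\cap R^n$ is saturated in $R^n$, so a direct summand since $R$ is a PID; by the stacked-basis theorem $\GL_n(R)$ acts transitively on the rational $a$-dimensional subspaces, with representative $U_0=\Span(e_1,\dots,e_a)$. For a direct sum indexed by a $\GL_n(R)$-set the coinvariants are the sum over orbit representatives of the coinvariants of the stabilizer, so the degree-$(a,b)$ part becomes $\big(\Sh^\chi_p(U_0)\otimes\Sh^\chi_q(F^n/U_0)\big)_{P_{a,b}}$, where $P_{a,b}=\mathrm{Stab}_{\GL_n(R)}(U_0)$ is the block upper-triangular parabolic. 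I would then note that $P_{a,b}$ acts on $U_0$ through its upper-left $\GL_a(R)$-block and on $F^n/U_0$ through its lower-right $\GL_b(R)$-block, while its unipotent radical acts trivially on both factors and has trivial determinant. Combined with $\Sh^\chi_p(U_0)\cong\Sh^\chi_{a,p}$ and $\Sh^\chi_q(F^n/U_0)\cong\Sh^\chi_{b,q}$, this collapses the coinvariants to $(\Sh^\chi_{a,p})_{\GL_a(R)}\otimes(\Sh^\chi_{b,q})_{\GL_b(R)}=B^\chi_{a,p}\otimes B^\chi_{b,q}$, the two copies of $\chi$ receiving the two block determinant characters. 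Summing over $a+b=n$ and $p+q=k$ yields the asserted map.

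I expect the main obstacle to be the careful accounting of the character twist: reconciling the single $\chi$ carried by the source $\Sh^\chi_{n,k}$ with the two copies appearing in each summand of the target, and checking that the global twist $\chi(\det g)$ is simultaneously the unique action making $\Delta$ equivariant and the one that restricts, on each $P_{a,b}$, to the product of the Levi determinant characters needed for the coinvariant identification. The transitivity and stabilizer facts are standard consequences of $R$ being a PID, and the sign analysis is immediate once $S(gV)=S(V)$ is observed, so it is the twist on which I would concentrate the verification.
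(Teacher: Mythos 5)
Your proposal is correct and takes essentially the same route as the paper: equivariance via $\GL_n(R)$ permuting the summands through its action on subspaces $U \subset F^n$, then identifying coinvariants via transitivity on $a$-dimensional subspaces (using that $R$ is a PID), the stabilizer $P_{a,b}$ of $F^a$, and its action through the Levi projection $P_{a,b} \to \GL_a(R) \times \GL_b(R)$ together with the canonical isomorphisms $\Sh^\chi_p(F^a) \cong \Sh^\chi_{a,p}$ and $\Sh^\chi_q(F^n/F^a) \cong \Sh^\chi_{b,q}$. Your explicit bookkeeping of the character twist $\chi(\det g) = \chi(\det g|_U)\,\chi(\det g|_{F^n/U})$ and of the invariance of the shuffle signs under $S(gV)=S(V)$ spells out details the paper leaves implicit, but it is the same argument.
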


\begin{proof}
For the equivariance it is important to note that $\GL_n(R)$ permutes the summands by acting on the set of subspaces $U\subset F^n$. 

The coinvariants on the left are $B^\chi_{n,k}$ by definition. On the right, we first note that $\GL_n(R)$ acts transitively on the set of $a$-dimensional $U\subset F^n$. (Here we use that $R$ is a PID.) Let $P_{a,b}$ denote the stabilizer of $F^a \subset F^n$ in $\GL_n(R)$, where $b=n-a$.

There is a canonical isomorphism of $\Sh^\chi_p(F^a) \to \Sh^\chi_{a,p}$. There is also a canonical isomorphism of $\Sh^\chi_q(F^n/F^a) \to \Sh^\chi_{b,q}$ by only recording the last $q$ coordinates of the representatives of vectors. $P_{a,b}$ acts on 
\[\Sh^\chi_{p}(F^a) \otimes \Sh^\chi_{q}(F^n/F^a) \cong \Sh^\chi_{a,p} \otimes \Sh^\chi_{b,q}\]
 via the projection onto its Levi factor: $P_{a,b} \to \GL_a(R) \times \GL_b(R)$. And that means that the coinvariants on the right-hand-side are isomorphic to
\[\bigoplus_{\substack{a+b=n\\p+q=k}} B^\chi_{a,p} \otimes B^\chi_{b,q}.\qedhere\]
\end{proof}

We get that $B^\chi$ is a coalgebra by the following lemma.

\begin{lemma}
$\Delta\colon B^\chi \to B^\chi \otimes B^\chi$ defines a (bi-)graded coalgebra.
\end{lemma}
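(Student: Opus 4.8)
The plan is to verify the two coalgebra axioms, coassociativity and counitality, for $\Delta$. Since the preceding lemma realizes $\Delta$ on $B^\chi$ as the $\GL_n(R)$-coinvariants of the equivariant map on the sharblies, and both triple composites $(\Delta\otimes\id)\Delta$ and $(\id\otimes\Delta)\Delta$ on $B^\chi$ are the coinvariants of the corresponding composites built from the relative coproduct $\Delta\colon \Sh^\chi_k(U/W)\to\bigoplus \Sh^\chi_p(V/W)\otimes\Sh^\chi_q(U/V)$, it suffices to establish both axioms at the level of sharblies; they then descend. For the counit I take $\varepsilon\colon B^\chi\to K$ to be the projection onto $B^\chi_{0,0}$, which is $K$ since $X(0)$ is empty and $\St_0(F)=K$ by convention. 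Counitality is immediate: for a basic sharbly $x=[v_1,\dots,v_{n+k}]^\chi$ the only summand of $\Delta(x)$ whose left tensor factor lies in $B^\chi_{0,0}$ is the one with $V=0$, where $S(0)=\varnothing$, the shuffle $\pi_0$ is trivial, $x'_0=[\,]^\chi$ is the generator of $B^\chi_{0,0}$, and $x''_0=x$; hence $(\varepsilon\otimes\id)\Delta=\id$, and symmetrically the $V=F^n$ summand gives $(\id\otimes\varepsilon)\Delta=\id$.

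For coassociativity I compute both triple composites on a basic sharbly $x$ and compare. Applying $\Delta$ and then re-splitting the left factor via $\Delta\otimes\id$ yields a sum over pairs $V_1\subseteq V_2\subseteq F^n$ (with $V_2=U$ the first subspace and $V_1=V\subseteq V_2$ the second), whereas re-splitting the right factor via $\id\otimes\Delta$ yields a sum over pairs $V_1\subseteq V_2$ (with $V_1=U$ and $V_2=V'$). In both cases the subspaces that occur are precisely the \emph{vector-spanned} ones, i.e.\ those equal to the span of the $v_i$ they contain, so the two composites range over the same set of flags. For a fixed flag $V_1\subseteq V_2$, both composites produce the \emph{same} triple tensor of sub-sharblies, namely the vectors with $v_i\in V_1$, those with $v_i\in V_2\setminus V_1$, and those with $v_i\notin V_2$, each listed in increasing index order, landing in
\[ \Sh^\chi(V_1)\otimes \Sh^\chi(V_2/V_1)\otimes \Sh^\chi(F^n/V_2).\]
Thus coassociativity reduces entirely to checking that the two accompanying signs agree.

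Writing $A=\{i:v_i\in V_1\}$, $B=\{i:v_i\in V_2\setminus V_1\}$, and $C=\{i:v_i\notin V_2\}$, the sign produced by $(\Delta\otimes\id)\Delta$ is $(-1)^{\pi_{V_2}+\pi_{V_1\subset V_2}}$ and that produced by $(\id\otimes\Delta)\Delta$ is $(-1)^{\pi_{V_1}+\pi_{V_2/V_1\subset F^n/V_1}}$. The key point is that each of these equals the sign of the single three-block shuffle that sorts $[n+k]$ into the consecutive blocks $A$, $B$, $C$. This rests on the elementary identity that for disjoint index sets the two-block shuffle sign appearing in $\Delta$ is $(-1)^{N(S,T)}$ with $N(S,T)=\#\{(s,t)\in S\times T:s>t\}$, together with the additivity $N(A\cup B,C)=N(A,C)+N(B,C)$ and $N(A,B\cup C)=N(A,B)+N(A,C)$. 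Indeed, $(\Delta\otimes\id)\Delta$ first separates $A\cup B$ from $C$ (sign $(-1)^{N(A,C)+N(B,C)}$) and then $A$ from $B$ (sign $(-1)^{N(A,B)}$), while $(\id\otimes\Delta)\Delta$ first separates $A$ from $B\cup C$ (sign $(-1)^{N(A,B)+N(A,C)}$) and then $B$ from $C$ (sign $(-1)^{N(B,C)}$); both total to $(-1)^{N(A,B)+N(A,C)+N(B,C)}$, so the signs coincide.

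The main obstacle is precisely this sign bookkeeping: the relative shuffles $\pi_{V_1\subset V_2}$ and $\pi_{V_2/V_1\subset F^n/V_1}$ are defined on the index sets of the \emph{intermediate} sub-sharblies rather than directly on $[n+k]$, and I must check that they restrict to, respectively, the sort of $A\cup B$ into $A,B$ and the sort of $B\cup C$ into $B,C$ under the identifications used in the relative $\Delta$. Once this shuffle lemma is verified, coassociativity follows for the relative $\Delta$ on the $\Sh^\chi$'s, descends to $B^\chi$, and together with the counit computation shows that $\Delta$ makes $B^\chi$ a (bi-)graded coalgebra.
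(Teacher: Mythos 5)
Your proof is correct and takes essentially the same route as the paper's: both verify coassociativity at the level of sharblies before passing to coinvariants, index the two triple composites by flags of vector-spanned subspaces, match the resulting triple tensors blockwise, and reduce to an equality of shuffle signs --- your inversion-count computation $(-1)^{N(A,B)+N(A,C)+N(B,C)}$ is precisely the sign content of the paper's identity $\pi_{W\subset U}\circ\pi_{U\subset F^n}=\pi_{U/W\subset F^n/W}\circ\pi_{W\subset F^n}$. You additionally verify counitality explicitly (the $V=0$ and $V=F^n$ summands), a point the paper leaves to the reader.
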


\begin{proof}
We will only check coassociativity here and leave all other properties to the reader. In fact, we check coassociativity before taking coinvariants.

Let $x = [v_1, \dots, v_{n+k}]^\chi\in \Sh^\chi_{n,k}$ a basic sharbly. We compute that
\[ (\Delta \otimes \id) \circ \Delta (x) = \sum_{\substack{W,U \in \mathcal U_x\\W \subseteq U}}
(-1)^{\pi_{U \subset F^n}}
 ((-1)^{\pi_{W\subset U}} (x'_{U})'_W \otimes (x'_{U})''_W ) \otimes x''_{U},\]
where $(x'_{U})'_W \in \Sh^\chi_a(W)$, $(x'_{U})''_W \in  \Sh^\chi_ b(U/W)$, and $x''_{U} \in \Sh^\chi_c(F^n/U)$ for corresponding $a+b+c=k$.

We also compute that
\[ (\id \otimes \Delta) \circ \Delta (x) = \sum_{\substack{W,U \in \mathcal U_x\\W \subseteq U}}
(-1)^{\pi_{W\subset F^n}}
 x'_{W} \otimes ((-1)^{\pi_{U/W \subset F^n/W}} (x''_{W})'_{U/W}  \otimes (x''_{W})''_{U/W}),\]
where $x'_W \in \Sh^\chi_a(W)$, $(x''_{W})'_{U}  \in \Sh^\chi_ b(U/W)$, and $(x''_{W})''_{U} \in \Sh^\chi_c(F^n/U)$ for corresponding $a+b+c=k$.

It is now easy to see that
\[ (x'_{U})'_W = x'_W , \quad (x'_{U})''_W = (x''_{W})'_{U}, \quad x''_{U} = (x''_{W})''_{U},\text{and} \quad \pi_{W\subset U} \circ \pi_{U \subset F^n}= \pi_{U/W \subset F^n/W}\circ \pi_{W\subset F^n} .\qedhere\]
\end{proof}

Let us check that it behaves well with the differential.

\begin{lemma}
$B^\chi$ together with the differential $\partial$ is a differential graded coalgebra.
\end{lemma}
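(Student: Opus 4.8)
The plan is to show that $\Delta$ is a chain map, which is exactly the assertion that $B^\chi$ is a differential graded coalgebra. Equipping $B^\chi \otimes B^\chi$ with the tensor differential determined by the total degree $m=n+k$, so that $\partial(u \otimes w) = (\partial u)\otimes w + (-1)^{a+p} u \otimes (\partial w)$ for $u \in B^\chi_{a,p}$, I must verify
\[ \Delta \circ \partial = \partial \circ \Delta \colon B^\chi_{n,k} \longrightarrow \bigoplus_{\substack{a+b=n\\p+q=k-1}} B^\chi_{a,p}\otimes B^\chi_{b,q}.\]
As with coassociativity, I would check the identity before passing to $\GL_n(R)$-coinvariants, at the level of the maps $\Delta\colon \Sh^\chi_{n,k} \to \bigoplus_U \Sh^\chi_p(U)\otimes \Sh^\chi_q(F^n/U)$, and then descend. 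Fixing a basic sharbly $x = [v_1, \dots, v_{n+k}]^\chi$, I expand $\Delta(\partial x) = \sum_i (-1)^i \Delta(\partial_i x)$, where $\partial_i x$ deletes $v_i$, and separately expand $\partial \Delta(x) = \sum_V (-1)^{\pi_V}\bigl((\partial x'_V)\otimes x''_V + (-1)^{\dim V + p}\, x'_V \otimes (\partial x''_V)\bigr)$.

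The observation that makes the two expansions agree is that degenerate faces vanish in the relative chain groups. Indeed, a face $[v_{s_1}, \dots, \widehat{v_{s_j}}, \dots]$ of $x'_V$ lies in $\Sh^\chi_{p-1}(V)$ and is therefore zero unless the surviving vectors still $F$-span $V$; likewise a face of $x''_V$ is zero unless the surviving vectors still span $F^n/V$. This is precisely the condition that $V$ remains spanned by a subset of the vectors of $\partial_i x$, i.e. that $V \in \mathcal{U}_{\partial_i x}$. Using this, I would set up a bijection: a nonzero summand of $\Delta(\partial_i x)$ indexed by $V \in \mathcal{U}_{\partial_i x}$ corresponds to deleting $v_i$ from $x'_V$ when $v_i \in V$ (a summand of $(\partial x'_V)\otimes x''_V$) and to deleting $v_i$ from $x''_V$ when $v_i \notin V$ (a summand of $x'_V \otimes (\partial x''_V)$). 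Conversely every summand on the right arises this way, with the vanishing of degenerate faces exactly matching the constraint $V \in \mathcal{U}_{\partial_i x}$ on both sides.

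The main work is then the sign bookkeeping. Writing the shuffle sign via inversions, $(-1)^{\pi_V} = (-1)^{\sum_a (s_a - a)}$, I would compute how deleting position $i$ alters the shuffle for $\partial_i x$. If $i = s_j$ is the $j$-th element of $S(V)$, the inversion count drops by $\#\{b : t_b < s_j\} = s_j - j$, and $(-1)^{s_j}(-1)^{s_j - j} = (-1)^j$ matches the sign $(-1)^{\pi_V}(-1)^j$ of the corresponding face of $x'_V$. If $i = t_l$ is the $l$-th element of the complement, the inversion count drops by $\#\{a : s_a > t_l\} = (\dim V + p) - (t_l - l)$, and the resulting sign works out to $(-1)^{\pi_V}(-1)^{\dim V + p}(-1)^l$, matching the sign of the corresponding face of $x''_V$. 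These two sign identities, combined with the term-by-term correspondence enabled by the vanishing of degenerate faces, give $\Delta \partial = \partial \Delta$ before coinvariants; passing to $\GL_n(R)$-coinvariants, which is compatible with both $\partial$ and $\Delta$ by the preceding lemmas, yields the claim. I expect the two shuffle-sign computations to be the only delicate point, the rest being formal matching.
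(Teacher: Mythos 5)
Your proof is correct and follows essentially the same route as the paper's: both verify $\Delta\circ\partial=\partial\circ\Delta$ on a basic sharbly before passing to coinvariants, match summands indexed by pairs $(V,i)$, and dispose of the discrepancy between $\mathcal U_x$ and $\mathcal U_{x_i}$ by the vanishing of degenerate faces (the paper's closing observation that $d_i(x'_U\otimes x''_U)=0$ when $U\in\mathcal U_x\setminus\mathcal U_{x_i}$). Your two inversion-count computations, giving drops of $s_j-j$ and $(\dim V+p)-(t_l-l)$, are exactly the paper's statement that the sign of $\pi_{U,i}$ differs from that of $\pi_U$ by $(-1)^{|i-\pi_U^{-1}(i)|}$, with $\pi_U^{-1}(i)$ equal to the position in $x'_U$ or the position in $x''_U$ plus $|S(U)|$.
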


\begin{proof}
Given a basic sharbly $x = [v_1, \dots, v_{n+k}]^\chi \in \Sh^\chi_{n,k}$, we need to check that
\[ \Delta(\partial x) = \sum_{U \in \mathcal U_x} (-1)^{\pi_{U}} ( \partial x'_U \otimes x''_U  + (-1)^{|S(U)|} x'_U\otimes \partial x''_U).\]
 
 We want to show that both sides of this equation are equal to the sum 
 \[ \sum_{\substack{U \in \mathcal U_x\\i\in[n+k]}} (-1)^{\pi_{U}}\cdot (-1)^{\pi_U^{-1}(i)} d_i (x'_U \otimes x''_U),\]
where $d_i$ omits the vector $v_i$ from $x'_U$ if $v_i\in U$ and from $x''_U$ if $v_i \not\in U$.

On the right-hand-side, we can index the summands by $U \in \mathcal U_x$ and the $v_i$ that is being omitted. As for the sign, note that $\pi_U^{-1}(i)$ is the position of $v_i$ in $x'_U$ if $v_i \in U$ and the position of $v_i$ in $x''_U$ plus $|S(U)|$ if $v_i \not\in U$.

For the left-hand-side, let us compute $\Delta(x_i)$ for every
\[ x_i =  [v_1,\dots,\hat v_i,\dots, v_{n+k}]^\chi.\]
This is
\[ \Delta(x_i) = \sum_{U \in \mathcal U_{x_i}} (-1)^{\pi_{U,i}} (x'_i)_{U} \otimes (x''_i)_{U},\]
where $\pi_{U,i}$ is the shuffle permutation obtained from $\pi_U$ by removing the strand that comes from $i$. 
That means we get a summand $d_i (x'_U \otimes x''_U)$ for every pair $(U,i)$ such that $U \in \mathcal U_{x_i}$. For these summands, the sign of $\pi_{U,i}$ is equal to $(-1)^{|i-\pi_U^{-1}(i)|}$.

We finish the proof by observing that if $U \in \mathcal U_x \setminus \mathcal U_{x_i}$, then $d_i( x'_U \otimes x''_U) =0$. This follows because then $v_i \in U$ and the vectors $v_s$ with $s \in S(U) \setminus \{i\}$ do not span $U$ without $v_i$.
\end{proof}

We conclude this section by proving that the product and the coproduct are compatible.

\begin{lemma}
The diagram
\[
\begin{tikzcd}
B^\chi \otimes B^\chi \arrow[r, "\nabla"] \arrow[d, "\Delta \otimes \Delta"']       & B^\chi \arrow[r, "\Delta"] & B^\chi\otimes B^\chi                                                          \\
B^\chi \otimes B^\chi \otimes B^\chi \otimes B^\chi \arrow[rr, "\id\otimes \tau \otimes \id"] &                       & B^\chi \otimes B^\chi \otimes B^\chi \otimes B^\chi \arrow[u, "\nabla \otimes \nabla"']
\end{tikzcd}
\]
commutes. Here $\tau\colon B_n^\chi \otimes B_m^\chi \to B_m^\chi \otimes B_n^\chi$ is given by
\[ \tau(x \otimes y) = (-1)^{nm}( y \otimes x).\]
\end{lemma}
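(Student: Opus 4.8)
The plan is to verify the bialgebra identity $\Delta \circ \nabla = (\nabla \otimes \nabla) \circ (\id \otimes \tau \otimes \id) \circ (\Delta \otimes \Delta)$ by evaluating both sides on a pair of basic coinvariant sharblies $x = [v_1, \dots, v_{n+k}]^\chi_{\GL_n(R)}$ and $y = [w_1, \dots, w_{m+l}]^\chi_{\GL_m(R)}$; since such tensors span $B^\chi \otimes B^\chi$ this suffices, and the entire computation takes place in the coinvariants, where $\Delta$ is already defined.

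The geometric heart of the argument is a bijection between the two indexing sets. Writing $F^{n+m} = (F^n \oplus 0) \oplus (0 \oplus F^m)$, every $\phi_{n,n+m}(v_i)$ lies in the first summand and every $\psi_{m,n+m}(w_j)$ in the second. Since a subspace $U \in \mathcal{U}_{xy}$ is spanned by a subset of these vectors, it splits as $U = \phi_{n,n+m}(V) \oplus \psi_{m,n+m}(V')$ with $V \in \mathcal{U}_x$ and $V' \in \mathcal{U}_y$, and conversely every such pair arises this way. First I would record this splitting and the resulting bijection $\mathcal{U}_{xy} \cong \mathcal{U}_x \times \mathcal{U}_y$, observing that a vector of $xy$ lies in $U$ exactly when its preimage lies in $V$ (respectively $V'$), so that $S(U)$ is the disjoint union of $S_x(V)$ with the shift of $S_y(V')$ into the second block of indices.

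Next I would match the tensor factors under this bijection. The ``inside'' factor $(xy)'_U$ consists, in their original order, of the $\phi$-images of the $v_i \in V$ followed by the $\psi$-images of the $w_j \in V'$; choosing a basis of $U \cap R^{n+m}$ adapted to the splitting identifies this, in $B^\chi$, with the product $x'_V \cdot y'_{V'}$. It is essential here that we have passed to coinvariants, so that the particular adapted basis is immaterial. The identical reasoning matches the ``outside'' factor $(xy)''_U$ with $x''_V \cdot y''_{V'}$. This shows the two sides agree summand by summand up to sign.

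The main obstacle, and the final step, is the sign bookkeeping. I would compute $(-1)^{\pi_U}$ by factoring the unshuffle of $S(U)$ against its complement into three pieces: the unshuffle of $S_x(V)$ inside the first block, contributing $(-1)^{\pi_V}$; the unshuffle of $S_y(V')$ inside the second block, contributing $(-1)^{\pi_{V'}}$; and the block transposition moving the $y'_{V'}$-strands past the $x''_V$-strands, contributing $(-1)^{[(n-\dim V)+q][\dim V'+p']}$. The exponent here is exactly the product of total degrees of $x''_V$ and $y'_{V'}$, so this last sign is precisely the Koszul sign that $\tau$ produces when swapping those two factors. Assembling the three pieces yields $(-1)^{\pi_U} = (-1)^{\pi_V}(-1)^{\pi_{V'}}(-1)^{[(n-\dim V)+q][\dim V'+p']}$, matching the right-hand side term for term. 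I expect the only delicate points to be confirming that the sizes of the transposed blocks are exactly the total degrees governing $\tau$, and checking that the determinant twist $\chi$ introduces no extra sign; both are handled as in the proof of \autoref{commutative}.
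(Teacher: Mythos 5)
Your proposal is correct and follows essentially the same route as the paper's proof: evaluating both composites on basic sharblies, using the unique splitting $W = \phi(U) \oplus \psi(V)$ to identify $\mathcal U_{xy}$ with $\mathcal U_x \times \mathcal U_y$, matching the inner and outer factors termwise, and identifying the block-transposition sign (the paper's $(-1)^{pq}$, with $p,q$ the numbers of vectors in $x''_U$ and $y'_V$) with the Koszul sign of $\tau$. The only cosmetic difference is that the paper carries out the computation equivariantly in the relative complexes $\Sh^\chi_*(U)$ and $\Sh^\chi_*(F^{n+m}/W)$ before passing to coinvariants, whereas you work directly in coinvariants via adapted bases; also, no group element is applied in the factor matching, so your worry about an extra $\chi$-sign evaporates, just as in the paper.
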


\begin{proof}
We evaluate both paths on $x\otimes y$ for basic sharblies $x = [v_1, \dots, v_{n+k}]^\chi \in \Sh^\chi_{n,k}$ and $y =  [w_1, \dots, w_{m+l}]^\chi\in \Sh^\chi_{m,l}$.

For the lower path we get that $x\otimes y$ is sent to
\[ \sum_{\substack{U \in \mathcal U_x\\ V \in \mathcal U_y}} (-1)^{pq}(-1)^{\pi_U}(-1)^{\pi_V}x'_Uy'_V \otimes x''_Uy''_V,\]
where $p$ is the number of vectors in $x''_U$ and $q$ the number of vectors in $y'_V$.
We note that we consider the product $x'_Uy'_V$ to be in
\[ \Sh^\chi_{a+c}(\phi(U) \oplus \psi(V)) \subset \Sh^\chi_{n+m,a+c},\]
where $a = |S(U)|-\dim U$ and $c = |S(V)|-\dim V$. And we consider the product $x''_Uy''_V$ to be in the quotient
\[ \Sh^\chi_{b+d}(F^{n+m}/(\phi(U) \oplus \psi(V)))\]
of $\Sh^\chi_{n+m,b+d}$, where $b= k-a$ and $d = l-c$.

For the upper path, we get that $x\otimes y$ is sent to
\[ \sum_{W \in \mathcal U_{xy}} (-1)^{\pi_{W}} (xy)'_W \otimes (xy)''_W.\]
To see that these two sums are the same, we first observe that every $W \in \mathcal U_{xy}$ uniquely decomposes into a direct sum $\phi(U)\oplus \psi(V) = W$ for pairs $(U,V) \in \mathcal U_x \times \mathcal U_y$.

It remains to show that the corresponding summands agree. For this note that 
\[ (xy)'_W = [\phi(v_{s_1}), \dots, \phi(v_{s_{\dim U + a}}), \psi(w_{\eta_1}), \dots, \psi(w_{\eta_{\dim V+c}})] \in \Sh^\chi_{a+c}(W),\]
where $S(U) = \{ s_1< \dots< s_{\dim U + a}\}$ and $S(V) = \{\eta_1<\dots < \eta_{\dim V + b}\}$ which is the same as
\[ x'_Uy'_V = [\phi(v_{s_1}), \dots, \phi(v_{s_{\dim U + a}}), \psi(w_{\eta_1}), \dots, \psi(w_{\eta_{\dim V+c}})] \in \Sh^\chi_{a+c}(\phi(U) \oplus \psi(V)).\]
And
\[ (xy)''_W = [\phi(v_{t_1}), \dots, \phi(v_{t_{n-\dim U+b}}), \psi(w_{\theta_1}), \dots, \psi(w_{\theta_{m-\dim V+b}})] \in \Sh^\chi_{b+d}(F^{n+m}/W),\]
where $[n+k]\setminus S(U) = \{ t_1< \dots< t_{n-\dim U+b}\}$ and $[m+l]\setminus S(V) = \{\theta_1<\dots < \theta_{m-\dim V+b}\}$, which is the same as
\[ x''_Uy''_V = [\phi(v_{t_1}), \dots, \phi(v_{t_{n-\dim U+b}}), \psi(w_{\theta_1}), \dots, \psi(w_{\theta_{m-\dim V+b}})] \in \Sh^\chi_{b+d}(\phi(F^n/U)\oplus \psi(F^m/V)).\]

Finally, we need to compare the signs. Let us consider the two permutations $\pi_U$ and $\pi_V$ as one permutation $\pi_{U,V}$ on $ n+k+m+l$ letters. We note that $\pi_W$ is $\pi_{U,V}$ postcomposing with the permutation that swaps the number of vectors in $x''_U$, which we denoted by $p$, and the number of vectors in $y'_V$, which we denoted by $q$, with each other. Thus
\[ (-1)^{\pi_{W}} = (-1)^{pq}(-1)^{\pi_U}(-1)^{\pi_V}.\qedhere\]
\end{proof}

The result of this section is summarized in the following theorem.

\begin{theorem}
For every PID $R$ and every character $\chi\colon R^\times \to \C^\times$, 
\[ B^\chi = \bigoplus_{n,k\ge0} (\Sh^\chi_{n,k} \otimes_K \chi)_{\GL_n(R)}\]
is a differential graded commutative bialgebra.
\end{theorem}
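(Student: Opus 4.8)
The plan is to read this theorem as the bookkeeping assembly of the structural lemmas proved throughout the section: each axiom in the definition of a differential graded commutative bialgebra is supplied essentially verbatim by one of them, and the only genuinely new (but routine) content is the verification of the unit and counit axioms, which were not recorded separately above. So the proof will consist of naming the correct grading conventions, then pointing each required axiom to the lemma that establishes it.

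First I would fix conventions. The object is $B^\chi$ with its bigrading $B^\chi_{n,k}$, total degree $m = n+k$, and differential $\partial$ of degree $-1$ (lowering $k$). The tensor square $B^\chi \otimes B^\chi$ carries the signed braiding $\tau(x \otimes y) = (-1)^{|x|\,|y|}\, y \otimes x$ on homogeneous elements, where $|x|$ is the total degree $n+k$; this matches the sign in \autoref{commutative} as well as the braiding $\tau(x\otimes y)=(-1)^{nm}(y\otimes x)$ appearing in the compatibility diagram, once one recalls that there $B^\chi_n$ denotes the total-degree-$n$ summand. I would also note that $B^\chi_{0,0} = K$, since $\St_0(F)=K$ by convention and $\GL_0(R)$ is trivial, and use this copy of $K$ as both the unit and the counit object.

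Next I would collect the two one-sided structures. The lemma asserting that $\nabla$ defines a bigraded algebra gives associativity and the unit $\eta\colon K \to B^\chi_{0,0}$; \autoref{commutative} gives graded commutativity with respect to total degree; and the lemma verifying $\partial(xy) = (\partial x)y + (-1)^{n+k} x(\partial y)$ shows $\partial$ is a derivation. Together these make $(B^\chi, \nabla, \eta, \partial)$ a differential graded commutative algebra. Dually, the lemma that $\Delta$ defines a bigraded coalgebra gives coassociativity and the counit $\epsilon\colon B^\chi \to B^\chi_{0,0} = K$ (the projection to bidegree $(0,0)$), and the differential-graded-coalgebra lemma shows $\partial$ is a coderivation, so $(B^\chi, \Delta, \epsilon, \partial)$ is a differential graded coalgebra. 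The remaining bialgebra compatibility — that $\Delta$ is a morphism of algebras for the signed tensor structure, i.e. $\Delta \circ \nabla = (\nabla \otimes \nabla) \circ (\id \otimes \tau \otimes \id) \circ (\Delta \otimes \Delta)$ — is exactly the content of the final compatibility lemma of the section.

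The only loose ends are the unit/counit axioms not stated earlier: that $\eta$ is a coalgebra map, that $\epsilon$ is an algebra map, and that $\epsilon \circ \eta = \id_K$. These all reduce to computing $\nabla$ and $\Delta$ in bidegree $(0,0)$, where everything is concentrated in the one-dimensional space $K$, and reading off the definitions yields $\Delta(1) = 1 \otimes 1$, $\epsilon(xy) = \epsilon(x)\epsilon(y)$, and $\epsilon(1)=1$ at once. I expect no real obstacle: the substantive compatibility was already dispatched in the preceding lemma, and the theorem is precisely the assertion that the preceding lemmas jointly exhaust the axioms. The one point worth flagging is that the sign conventions for graded commutativity and for the braiding $\tau$ refer to the same total grading $n+k$, which is what lets the algebra, coalgebra, and compatibility structures cohere into a single bialgebra rather than merely coexist.
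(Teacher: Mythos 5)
Your proposal is correct and matches the paper's route exactly: the paper offers no separate argument for this theorem, stating only that it ``summarizes'' the section, so its proof is precisely the assembly you perform of the preceding lemmas (the algebra structure of $\nabla$, graded commutativity as in \autoref{commutative}, the Leibniz rule for $\partial$, the coalgebra structure of $\Delta$, the coderivation property, and the final product--coproduct compatibility diagram). Your extra verifications --- the unit and counit axioms concentrated in the one-dimensional bidegree $(0,0)$ piece, and the observation that the commutativity sign $(-1)^{(n+k)(m+l)}$ and the braiding $\tau$ both refer to the same total grading $m=n+k$ --- are exactly the routine details the paper leaves implicit.
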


We get the following corollary as an immediate consequence.

\begin{corollary}\label{bialg}
For every PID $R$ and every character $\chi\colon R^\times \to \C^\times$, 
\[ H(B^\chi) \cong \bigoplus_{n,k\ge0} H_k(\GL_n(R); \St_n(F) \otimes_K \chi)\]
is a graded commutative bialgebra.
\end{corollary}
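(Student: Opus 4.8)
The plan is to obtain the corollary by applying the homology functor $H_*(-)$ to the differential graded commutative bialgebra $B^\chi$ of the preceding theorem, using the fact that this functor is symmetric monoidal because we work over a field. Concretely, the product $\nabla$ and coproduct $\Delta$ are chain maps with respect to $\partial$: this is precisely what the differential graded algebra and differential graded coalgebra lemmas establish, since their Leibniz rules say exactly that $\nabla$ and $\Delta$ commute with the differentials on $B^\chi$ and on $B^\chi \otimes B^\chi$ (the latter being the standard tensor-product differential). Hence $\nabla$ and $\Delta$ descend to maps $H(\nabla)$ and $H(\Delta)$ on homology.

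First I would record that every object in sight is a complex of $K$-vector spaces, $K$ being a field of characteristic zero: indeed $\Sh^\chi_{n,k}$ is a $K$-vector space and so are its $\GL_n(R)$-coinvariants $B^\chi_{n,k}$. Over a field the Künneth map
\[ H(B^\chi) \otimes_K H(B^\chi) \xrightarrow{\cong} H(B^\chi \otimes_K B^\chi) \]
is an isomorphism, and it is natural and compatible with the Koszul symmetry (the sign being governed by the total degree $n+k$, as in \autoref{commutative}). Composing $H(\nabla)$ with this isomorphism yields a product on $H(B^\chi)$, and precomposing $H(\Delta)$ with it yields a coproduct.

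Then I would deduce each bialgebra axiom by transporting the corresponding commuting diagram of chain maps from $B^\chi$ through the strong symmetric monoidal functor $H_*(-)$: associativity and coassociativity come from the algebra and coalgebra lemmas, the compatibility of product and coproduct comes from the final compatibility square of the section (where $H_*$ sends $\tau$ to the symmetry of graded vector spaces), and graded commutativity comes from \autoref{commutative} together with the fact that the Künneth isomorphism intertwines the two symmetries. Since the differential $\partial$ preserves the index $n$, the homology splits as $H(B^\chi)_{n,k} \cong H_k(\GL_n(R); \St_n(F) \otimes_K \chi)$, an identification already recorded above, giving the claimed decomposition.

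The main point requiring care, rather than a genuine obstacle, is the bookkeeping of signs and gradings: one must verify that the Künneth isomorphism respects the Koszul sign convention built into $\tau$ in the compatibility lemma and into \autoref{commutative}, so that commutativity and the bialgebra compatibility are preserved on the nose and not merely up to sign. Because we are over a field there are no $\Tor$ terms to obstruct the Künneth isomorphism, and it is this absence that makes the passage to homology purely formal and the corollary an immediate consequence of the theorem.
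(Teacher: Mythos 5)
Your proposal is correct and takes essentially the same route as the paper: the paper simply declares the corollary ``an immediate consequence'' of the theorem that $B^\chi$ is a differential graded commutative bialgebra, the implicit justification being exactly the formal argument you spell out, namely that over the field $K$ the K\"unneth map is an isomorphism with no $\Tor$ obstruction, so $H_*(-)$ is strong symmetric monoidal and transports the product, coproduct, and all bialgebra axioms (with the Koszul signs) to homology. Your write-up just makes explicit the standard bookkeeping the authors left to the reader.
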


\begin{remark}
Let $R$ be a PID with a finite group of units $R^\times$, for example $R= \Z$ or $R$ a complex quadratic number ring which is a PID. And let $K$ be a field of characteristic zero over which the $R^\times$-representations split into one-dimensional irreducibles, for example $\C$. Then for $n\ge 1$, 
\[ H_k(\SL_n(R); \St_n(F)) \cong H_k(\GL_n(R); \St_n(F) \otimes \chi_1) \oplus \dots \oplus H_k(\GL_n(R); \St_n(F) \otimes \chi_r),\]
where the regular representation $KR^\times \cong \chi_1 \oplus \dots \oplus \chi_r$.

In fact, Equation \eqref{Steinbergmult} induces an algebra structure on
\[ \bigoplus_{n,k\ge 0} H_k(\SL_n(R); \St_n(F)),\]
but products between elements of summands corresponding to different characters are zero.
\end{remark}

\section{Hopf algebras}

In this section, we recall several definitions and facts about Hopf algebras and then prove a generalization of \autoref{H(GL)}.

A graded bialgebra $B = \bigoplus_{n\ge 0} B_n$ is called \emph{connected} if $B_0 = K$.

Usually, a Hopf algebra is defined to be a bialgebra that has an antipode. We will mainly cite Milnor--Moore \cite{MM} in this section, who do not require an antipode, i.e. their ``Hopf algebras'' are simply bialgebras. But it turns out that every connected bialgebra is automatically a Hopf algebra which follows from \cite[Proposition 8.2]{MM}. Together with \autoref{bialg}, this implies the following generalization of \autoref{H(GL)}.

\begin{theorem}\label{hopfalg}
For every PID $R$ and every character $\chi\colon R^\times \to \C^\times$, 
\[ H(B^\chi) \cong \bigoplus_{n,k\ge0} H_k(\GL_n(R); \St_n(F) \otimes_K \chi)\]
is a graded commutative Hopf algebra.
\end{theorem}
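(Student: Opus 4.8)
The plan is to obtain the Hopf structure essentially for free from the bialgebra structure already established, using the standard fact that a connected bialgebra automatically admits an antipode. By \autoref{bialg}, $H(B^\chi)$ is a graded commutative bialgebra with respect to the \emph{total} grading $m = n+k$. This is the grading that makes the product additive (recall $\nabla$ sends $B^\chi_{n,k}\otimes B^\chi_{m,l}$ into $B^\chi_{n+m,k+l}$), makes the coproduct $\Delta$ degree-preserving, and is exactly the grading appearing in the commutativity sign $(-1)^{(n+k)(m+l)}$ of \autoref{commutative}. So the only remaining work is to verify connectivity and then invoke the cited result of Milnor--Moore.

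First I would check that $H(B^\chi)$ is connected in this grading. The degree-$0$ part is the unique summand with $n+k=0$, i.e. $n=k=0$, namely
\[ H(B^\chi)_{0,0} \cong H_0(\GL_0(R); \St_0(F)\otimes_K\chi).\]
Here $\GL_0(R)$ is the trivial group, $\St_0(F) = K$ by convention, and $\chi$ acts trivially in rank $0$ since it factors through the determinant, so this reduces to $H_0(\ast; K) \cong K$. Hence $H(B^\chi)_0 \cong K$ and the bialgebra is connected.

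Then I would apply \cite[Proposition 8.2]{MM}: a connected graded bialgebra carries a (necessarily unique) antipode $S$, constructed degree by degree from the convolution identity $\nabla\circ(S\otimes\id)\circ\Delta = \eta\circ\epsilon$ with $S=\id$ in degree $0$, where connectivity guarantees that each inductive step is solvable. Combining this with the graded commutativity of \autoref{commutative}, which descends to the homology $H(B^\chi)$, shows that $H(B^\chi)$ is a graded commutative Hopf algebra, as claimed.

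There is no hard computation here; the only real subtlety is the bookkeeping about which grading to use. One must take the total degree $m=n+k$ rather than either of the two individual gradings, since it is precisely under this grading that the product is additive, the coproduct preserves degree, and the degree-$0$ piece collapses to $K$. With that identification fixed, the theorem is an immediate consequence of \autoref{bialg} together with the Milnor--Moore antipode construction.
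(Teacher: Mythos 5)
Your proposal is correct and takes essentially the same route as the paper: it invokes \autoref{bialg} for the graded commutative bialgebra structure (in the total grading $m=n+k$), observes connectivity since the degree-zero piece is $H_0(\GL_0(R);\St_0(F)\otimes_K\chi)\cong K$, and applies \cite[Proposition 8.2]{MM} to produce the antipode. The paper leaves the connectivity check implicit, and you spell it out correctly; otherwise the arguments coincide.
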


The \emph{primitives} of a bialgebra $B$ are given by
\[ P(B) = \{ x \in B \mid \Delta(x) = x \otimes 1 + 1 \otimes x \}.\]

For a connected bialgebra $B$, let $B_+$ denote the augmentation ideal $\bigoplus_{n\ge 1} B_n$. The \emph{indecomposables} of $B$ are
\[ Q(B) = B_+/(B_+)^2.\]

Because $P(B) \subset B_+$, there is a canonical map $P(B) \to Q(B)$.

\begin{proposition}[{Milnor--Moore \cite[Proposition 4.17]{MM}}]\label{inj}
A connected Hopf algebra $H$ over a characteristic zero field is commutative if and only if $P(H) \to Q(H)$ is injective.
\end{proposition}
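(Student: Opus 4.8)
The plan is to analyze the kernel of the natural map $\pi\colon P(H)\to Q(H)$. Since $\pi$ factors as $P(H)\hookrightarrow H_+\twoheadrightarrow H_+/H_+^2$, its kernel is exactly the space $P(H)\cap H_+^2$ of \emph{decomposable primitives}. Thus the proposition is equivalent to: $H$ is commutative if and only if $H$ has no nonzero decomposable primitive, and I would prove the two implications separately.

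For ``$\pi$ injective $\Rightarrow$ $H$ commutative'' I would argue by contraposition, producing a nonzero decomposable primitive from any failure of commutativity. Suppose $H$ is not graded commutative and let $n$ be minimal such that there are homogeneous $a,b$ with $|a|+|b|=n$ and $c:=ab-(-1)^{|a||b|}ba\neq0$; then $c\in H_+^2$ is decomposable. Writing $\Delta a=a\otimes1+1\otimes a+\bar\Delta a$ with $\bar\Delta a\in\bigoplus_{0<i<|a|}H_i\otimes H_{|a|-i}$, and similarly for $b$, I would expand $\Delta(c)$ as the graded commutator of $\Delta a$ and $\Delta b$ in $H\otimes H$. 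The mixed terms $a\otimes b$ and $b\otimes a$ cancel after the Koszul sign bookkeeping, the outer terms assemble to $c\otimes1+1\otimes c$, and every remaining term involves $\bar\Delta a$ or $\bar\Delta b$ and so cancels in pairs, being a graded commutator of elements of degree strictly less than $n$, which commute by minimality of $n$. Hence $\Delta(c)=c\otimes1+1\otimes c$, so $c$ is a nonzero decomposable primitive. This half uses neither commutativity nor the characteristic.

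For the converse ``$H$ commutative $\Rightarrow$ $\pi$ injective'' I would use the first Eulerian idempotent. Since $H$ is connected, $J:=\id-u\epsilon$ (with $u,\epsilon$ the unit and counit) is convolution-nilpotent in each degree, so $e:=\log^*(\id)=\sum_{k\ge1}\frac{(-1)^{k-1}}{k}J^{*k}$ is a well-defined endomorphism, where $f*g=\nabla\circ(f\otimes g)\circ\Delta$. On a primitive $x$ one has $J^{*k}(x)=0$ for $k\ge2$, so $e|_{P(H)}=\id$. The key point is that when $H$ is commutative the multiplication $\nabla\colon H\otimes H\to H$ is an algebra map, whence the convolution of two algebra endomorphisms is again one; thus the unipotent algebra endomorphisms (those reducing to $u\epsilon$ modulo $H_+$) form a group under $*$, and $\log^*$ carries it into its Lie algebra of $\epsilon$-derivations. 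As $\id$ lies in this group, $e$ is an $\epsilon$-derivation, $e(xy)=\epsilon(x)e(y)+\epsilon(y)e(x)$, which vanishes on $H_+^2$. Therefore any $z\in P(H)\cap H_+^2$ satisfies $z=e(z)=0$, and $\pi$ is injective.

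I expect the main obstacle to be this converse direction, specifically the verification that $e=\log^*(\id)$ is an $\epsilon$-derivation: this is exactly where commutativity (so that $\nabla$ is an algebra map, equivalently convolution preserves algebra endomorphisms) and the characteristic-zero hypothesis (convergence of the logarithm and the unipotent group/Lie-algebra correspondence) are both essential. If one prefers to bypass the idempotent machinery, an alternative is to pass to the graded dual $H^*$ (legitimate here as $H$ is of finite type) and deduce the statement from the Milnor--Moore structure theorem for connected cocommutative Hopf algebras over a characteristic-zero field, using the identifications $P(H^*)\cong Q(H)^*$ and $Q(H^*)\cong P(H)^*$, which turn injectivity of $\pi$ into surjectivity of the corresponding map for $H^*$ and commutativity of $H$ into cocommutativity of $H^*$.
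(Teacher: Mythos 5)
Your proposal is correct, but there is no internal proof to compare it against: the paper uses this proposition as a black box, quoting it verbatim from Milnor--Moore \cite[Proposition 4.17]{MM}. What you have written is therefore a genuine self-contained proof, and it takes a different route from Milnor--Moore's original one, which derives the statement from their duality between primitives and indecomposables and their structure theory for primitively generated (equivalently, in characteristic zero, cocommutative) Hopf algebras --- essentially the dual argument you sketch in your last paragraph. Your first half is the standard minimal-degree argument and is sound: with $n$ minimal admitting a nonzero graded commutator $c = ab - (-1)^{|a||b|}ba$, connectedness forces $|a|,|b|\ge 1$, so $c \in H_+^2$; and pairing each Sweedler term of $\Delta(a)\Delta(b)$ with components $(a_{(1)},a_{(2)},b_{(1)},b_{(2)})$ against the matching term of $(-1)^{|a||b|}\Delta(b)\Delta(a)$, every non-outer pair has both tensor slots of total degree $<n$ (each slot of positive degree), so the pair cancels by minimality after the Koszul signs are tracked, leaving $\Delta(c)=c\otimes 1+1\otimes c$; as you say, this direction needs neither commutativity nor characteristic zero. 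Your second half via the Eulerian idempotent $e=\log^*(\id)$ is also correct and correctly locates where the two hypotheses enter; the one step you should expand is the verification that $\log^*$ of a convolution-unipotent algebra map is an $\epsilon$-derivation. A clean way: $\nabla\colon H\otimes H\to H$ is always a coalgebra map (this is the bialgebra axiom), precomposition with a coalgebra map is multiplicative for convolution, and $\nabla = p_1 * p_2$ where $p_1(x\otimes y)=x\epsilon(y)$ and $p_2(x\otimes y)=\epsilon(x)y$; since $p_2 * p_1$ is the opposite multiplication, $p_1$ and $p_2$ convolution-commute exactly when $H$ is graded commutative, whence $e\circ\nabla=\log^*(p_1*p_2)=\log^*(p_1)+\log^*(p_2)=e\circ p_1+e\circ p_2$, i.e.\ $e(xy)=e(x)\epsilon(y)+\epsilon(x)e(y)$, killing $H_+^2$ while fixing $P(H)$. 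Two minor caveats: the dual alternative you mention needs the finite-type hypothesis you flag (the proposition as stated carries none, though one can reduce to it, since the sub-bialgebra generated by finitely many homogeneous elements of a connected graded bialgebra has finite type --- this matters because the paper's application is to the bigraded algebra $H(B^\chi)$, where each graded piece is a finite direct sum of twisted Steinberg homology groups); and ``commutative'' must throughout be read as graded commutative with Koszul signs, which your bookkeeping already respects.
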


\begin{theorem}[{Leray \cite[Theorem 7.5]{MM}}]\label{Leray}
A connected graded commutative Hopf algebra $H$ over a characteristic zero field is isomorphic to the free graded commutative algebra generated by $Q(H)$. 
\end{theorem}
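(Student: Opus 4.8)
The plan is to reconstruct the classical Borel--Leray argument, which produces an explicit isomorphism from the free graded-commutative algebra on $Q(H)$ rather than merely an abstract one. Throughout, $H_{+}=\bigoplus_{n\ge1}H_n$ denotes the augmentation ideal and $\Sym(V)$ the free graded-commutative algebra on a graded vector space $V$ (a polynomial algebra on the even part tensored with an exterior algebra on the odd part). First I would choose a homogeneous linear splitting $s\colon Q(H)\to H_{+}$ of the projection $H_{+}\twoheadrightarrow Q(H)$ and extend it multiplicatively to an algebra homomorphism
\[ \phi\colon \Sym(Q(H))\longrightarrow H. \]
Surjectivity of $\phi$ is the easy, purely algebraic part: for a connected graded algebra the indecomposables generate, so one shows by induction on degree that every element of $H_n$ lies in the image, using that $H_{+}\cdot H_{+}=(H_{+})^2$ accounts for the decomposables. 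The entire content of the theorem is therefore injectivity of $\phi$, and this is where both the coalgebra structure and the characteristic-zero hypothesis are indispensable.

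For injectivity I would argue by induction, peeling off one generator at a time. Let $d$ be the lowest degree in which $H_{+}\neq0$ and pick a nonzero homogeneous $x\in H_d$. Since the reduced coproduct satisfies $\bar\Delta(x)\in\bigoplus_{0<i<d}H_i\otimes H_{d-i}=0$, the element $x$ is automatically primitive and indecomposable, so it contributes a basis vector of $Q(H)$. Let $B\subseteq H$ be the sub-Hopf-algebra generated by $x$. Here characteristic zero enters decisively: if $|x|$ is odd then graded commutativity gives $2x^2=0$, hence $x^2=0$ and $B=\bigwedge[x]$; if $|x|$ is even then $x^n\neq0$ for all $n$, because $\bar\Delta(x^n)=\sum_{0<i<n}\binom{n}{i}x^i\otimes x^{n-i}$ has nonzero, linearly independent summands (the binomial coefficients are invertible), forcing $B=K[x]$. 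Thus $B$ is a \emph{free} monogenic Hopf algebra; in positive characteristic one would instead obtain a truncated polynomial algebra and the conclusion would fail.

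The main obstacle is the next step: showing that $H$ is free as a $B$-module, with $H\cong B\otimes (H/\!/B)$ as algebras, where $H/\!/B=K\otimes_B H$ is the quotient Hopf algebra. I would deduce this from the Milnor--Moore freeness theorem for a connected Hopf algebra over a sub-Hopf-algebra (equivalently, the fundamental theorem of Hopf modules applied to $H$ regarded as a $B$-Hopf-module), which is the genuinely Hopf-theoretic input and the technical heart of the proof. Granting it, $H/\!/B$ is again a connected graded-commutative Hopf algebra with $Q(H/\!/B)\cong Q(H)/\langle\bar x\rangle$, so the induction continues; organizing it degree by degree handles the case of infinitely many generators as a filtered colimit, and assembling the monogenic factors $B$ produces exactly $\Sym(Q(H))$, identifying $\phi$ as an isomorphism.

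As alternative routes I would keep in mind two characteristic-zero arguments that bypass the inductive bookkeeping. The first is dualization: when $H$ is of finite type, $H^{*}$ is connected graded-cocommutative, so by the cocommutative Milnor--Moore theorem $H^{*}\cong U(L)$ for the graded Lie algebra $L=P(H^{*})$; the Poincar\'e--Birkhoff--Witt isomorphism $U(L)\cong\Sym(L)$ of coalgebras then dualizes to $H\cong\Sym(Q(H))$. The second is the Eulerian-idempotent method: the convolution logarithm $e=\log^{*}(\id_H)=\sum_{n\ge1}\frac{(-1)^{n-1}}{n}(\id_H-u\epsilon)^{*n}$ converges by connectedness, is idempotent in the commutative case, and has image mapping isomorphically onto $Q(H)$, directly exhibiting $H$ as the free graded-commutative algebra on $\mathrm{Im}(e)$. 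Either way, the indispensable ingredient is characteristic zero, entering through invertibility of the relevant integers (binomials, $n!$, or the factor $2$), and the crux remains the interaction of the product with the coproduct.
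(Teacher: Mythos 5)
The paper contains no proof of \autoref{Leray}: the statement is imported verbatim from Milnor--Moore \cite[Theorem 7.5]{MM} and used as a black box, so there is no internal argument to compare against; the relevant comparison is with the classical proofs, and your reconstruction is correct as a sketch. Your primary route is the Borel-style induction (split $Q(H)$ into $H_+$, peel off a lowest-degree primitive $x$, show the monogenic sub-Hopf-algebra $B$ is $\bigwedge[x]$ or $K[x]$, quotient, repeat), and you correctly isolate the two places characteristic zero is indispensable: $2x^2=0$ in odd degree, and the invertibility of the binomial coefficients in $\bar\Delta(x^n)=\sum_{0<i<n}\binom{n}{i}x^i\otimes x^{n-i}$ in even degree --- though note that the even-degree argument should be phrased as an induction (take $n$ minimal with $x^n=0$; the summands lie in distinct bidegrees $H_{id}\otimes H_{(n-i)d}$, so each must vanish, contradicting minimality). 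Milnor--Moore's own argument is in fact closer to your first alternative than to your main route: they work through their structure theory for primitively generated Hopf algebras and Poincar\'e--Birkhoff--Witt, dualizing the commutative case to the cocommutative one; your main route instead delegates its hardest step to the freeness of $H$ over the sub-Hopf-algebra $B$, which is a genuine theorem in the connected graded setting, so no step fails, but two points deserve flagging. First, module-freeness alone does not finish the induction: you assert $H\cong B\otimes(H/\!/B)$ \emph{as algebras}, and this requires lifting an algebra section of $H\twoheadrightarrow H/\!/B$, which is available only after the inductive hypothesis identifies $H/\!/B$ as free graded commutative (lift its generators arbitrarily); your write-up elides this order of quantifiers. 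Second, the dualization route needs finite type for the graded dual, and the reduction --- every connected graded Hopf algebra is the filtered union of its finitely generated sub-Hopf-algebras (close a finite set of homogeneous elements under the finitely many components of the coproduct, inducting on degree), and these are automatically of finite type --- should be made explicit rather than waved at. With those two caveats supplied, each of your three routes yields a complete proof, and the Eulerian-idempotent argument you mention is indeed a known direct proof in the commutative characteristic-zero case.
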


Therefore we get the following generalization of \autoref{introcor}.

\begin{corollary}
For every PID $R$ and every character $\chi\colon R^\times \to \C^\times$, 
\[ H(B^\chi) \cong \bigoplus_{n,k\ge0} H_k(\GL_n(R); \St_n(F) \otimes_K \chi)\]
is a free graded commutative algebra.
\end{corollary}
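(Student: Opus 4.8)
The plan is to deduce this directly from Leray's theorem (\autoref{Leray}), whose hypotheses are precisely that one has a \emph{connected} graded commutative Hopf algebra over a field of characteristic zero. The graded commutative Hopf algebra structure is already supplied by \autoref{hopfalg}, where the relevant grading is the total grading $m = n+k$: under this grading both $\nabla$ and $\Delta$ are homogeneous, and the commutativity sign of \autoref{commutative}, namely $(-1)^{(n+k)(m+l)}$, is exactly $(-1)^{|x||y|}$ for $|x| = n+k$. Thus the only genuinely new input needed is connectedness, after which the corollary follows at once.

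First I would verify connectedness, i.e. that the degree-zero part of $H(B^\chi)$ in the total grading is one-dimensional and equal to $K$. In this grading, total degree $0$ forces $n = k = 0$, so the only relevant summand is $H_0(\GL_0(R); \St_0(F) \otimes_K \chi)$. Using the conventions that $\St_0(F) = K$ and that $\GL_0(R)$ is the trivial group, this is $H_0$ of the trivial group with coefficients in $K$, which is $K$. At the chain level this says $B^\chi_{0,0} = (\Sh^\chi_{0,0})_{\GL_0(R)} \cong K$, and since $\partial$ lowers $k$ while fixing $n$, no differential enters or leaves bidegree $(0,0)$; hence $H(B^\chi)_{0,0} = K$ and all other summands of total degree $0$ are empty. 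So $H(B^\chi)$ is connected.

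With connectedness in hand, I would conclude by invoking \autoref{Leray}: since $K$ has characteristic zero and $H(B^\chi)$ is a connected graded commutative Hopf algebra, it is isomorphic to the free graded commutative algebra generated by its module of indecomposables $Q(H(B^\chi))$. This is exactly the assertion that $H(B^\chi)$ is free graded commutative, completing the proof.

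The main point to be careful about is not a true obstacle but a bookkeeping issue: one must check that the grading making the algebra connected is the \emph{total} grading $m = n+k$ (rather than the homological degree $k$ alone or the rank $n$ alone), and that the graded commutativity sign from \autoref{commutative} matches this grading — both of which hold. It is also worth emphasizing that commutativity, and not merely the Hopf structure, is the essential hypothesis of Leray's theorem. All of the substantive work — constructing the product and coproduct, verifying their compatibility, and upgrading the connected bialgebra to a Hopf algebra — has already been carried out in the preceding sections and recorded in \autoref{hopfalg}, so nothing further is required here.
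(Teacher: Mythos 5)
Your proposal is correct and follows essentially the same route as the paper: the corollary is obtained by applying Leray's theorem (\autoref{Leray}) to the connected graded commutative Hopf algebra of \autoref{hopfalg}, graded by total degree $m=n+k$. Your explicit verification of connectedness in bidegree $(0,0)$ is a detail the paper leaves implicit (it is already needed to invoke Milnor--Moore's Proposition 8.2 for the Hopf structure), but it does not change the argument.
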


\section{New classes}

In this section, we take $R=\Z$, $F=\Q$, and $K=\Q$. 
Let $n\ge 3$. We call the basic coinvariant sharbly
\[ w_n = [e_1, \dots, e_n, e_1-e_2, \dots, e_{n-1}-e_n, e_n-e_1]_{\GL_n(\Z)} \in (\Sh_{n,n})_{\GL_n(\Z)}\]
a \emph{wheel sharbly}. It is easy to check that $w_n =0$ if $n$ is even and that $\partial w_n = 0$ if $n$ is odd. Recently, Brown--Schnetz \cite[Theorem 4(i)]{Brown24} proved%
\footnote{They show that a region $\tau_{W_n}$ in the space $\mathcal P_n$ of positive definite $n\times n$ real matrices gives rise to a nonzero class in the locally finite homology $H^{\mathrm{lf}}_{2n}(\mathcal P_n/\GL_n(\Z)) \cong H_n(\GL_n(\Z);\St_n(\Q))$. There is a quasi-isomorphism from $(\Sh_{n,*})_{\GL_n(\Z)}$ to the locally finite chains of $\mathcal P_n/\GL_n(\Z)$ defined by sending a basic coinvariant sharbly $[v_1,\dots, v_{n+k}]_{\GL_n(\Z)}$ to the region in $\mathcal P_n$ which is the given by elements $c_1 \cdot v_1v_1^T + \dots + c_{n+k} \cdot v_{n+k}v_{n+k}^T$ for all positive real numbers $c_1, \dots, c_{n+k}>0$. Their discussion makes clear that our $w_n$ is sent to their $\tau_{W_n}$.}
that for odd $n\ge3$, $w_n$ gives rise to a nonzero element in homology
\[ H(B)_{n,n} \cong H_n(\GL_n(\Z); \St_n(\Q)).\]

We want to prove the following two proposition about them and the trivial classes $t_n \in H^0(\GL_n(\Z))$.

\begin{proposition}\label{primitivewheels}
$w_n$ is primitive in $B$.
\end{proposition}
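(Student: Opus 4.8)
The plan is to verify the primitivity equation $\Delta(w_n) = w_n \otimes 1 + 1 \otimes w_n$ by evaluating the coproduct on a chain-level representative and showing that all but two of the resulting terms die in the $\GL$-coinvariants. Concretely, lift $w_n$ to the basic sharbly $x = [v_1, \dots, v_{2n}] \in \Sh_{n,n}$ with $v_i = e_i$ for $1 \le i \le n$ and $v_{n+i} = e_i - e_{i+1}$ (indices read cyclically, so $v_{2n} = e_n - e_1$), and apply the chain-level formula
\[ \Delta(x) = \sum_{V \in \mathcal{U}_x} (-1)^{\pi_V}\, x'_V \otimes x''_V, \]
the sum running over the subspaces $V \subseteq \Q^n$ spanned by subsets of $\{v_1, \dots, v_{2n}\}$. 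Since $\Delta$ is $\GL_n(\Z)$-equivariant and descends to coinvariants, it suffices to understand this sum and then project to $B \otimes B$.

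First I would dispose of the two extreme subspaces. For $V = \Q^n$ the index set $S(V)$ is everything, $\pi_V$ is trivial, $x''_V$ is the empty sharbly $1 \in \Sh_0(\Q^n/\Q^n) = \Q$, and $x'_V = x$; for $V = 0$ the roles are reversed. After projecting to coinvariants these two terms contribute exactly $w_n \otimes 1 + 1 \otimes w_n$. Thus everything reduces to the claim that $x'_V \otimes x''_V$ vanishes for every \emph{proper nonzero} $V \in \mathcal{U}_x$, and I would in fact prove the stronger statement that already $x''_V = 0$ in $\Sh_q(\Q^n/V)$ for each such $V$. Recall that $x''_V$ lists the vectors $v_i \notin V$ viewed in the quotient $\Q^n/V$, and that it vanishes as soon as two of these ``outside'' vectors agree as vertices of $V^\pm(\Q^n/V)$, i.e. become equal up to sign modulo $V$.

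The combinatorial core is to exhibit such a coincidence, and I would organize it by the set $A = \{ i : e_i \in V\}$ of spokes lying in $V$. If $A = \{1, \dots, n\}$ then $V = \Q^n$, which is excluded. If $A = \emptyset$, then since $V \neq 0$ is spanned by wheel vectors it must contain some rim vector $e_i - e_{i+1}$; its endpoints $e_i, e_{i+1}$ are both outside and become equal modulo $V$, so $x''_V = 0$. Finally, if $A$ is proper and nonempty, the cyclic order supplies an index $i$ with $e_i \in V$ but $e_{i+1} \notin V$; then $e_i - e_{i+1} \notin V$ (otherwise $e_{i+1} = e_i - (e_i - e_{i+1}) \in V$), so the rim vector $e_i - e_{i+1}$ and the spoke $e_{i+1}$ are both outside, and they agree up to sign modulo $V$ because $(e_i - e_{i+1}) + e_{i+1} = e_i \in V$. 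In every case two outside vectors collide, forcing $x''_V = 0$.

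I expect the only genuine work to be this case analysis, namely confirming that no proper nonzero span of wheel vectors can separate the $2n$ vectors into two truly independent groups; the argument above shows this follows cleanly from the cyclic incidence pattern between spokes and rim vectors. Once the vanishing of all intermediate terms is established, the evaluation of the two surviving terms is immediate and yields primitivity. In particular, for odd $n$, where $\partial w_n = 0$, this shows that the homology class of $w_n$ is primitive in $H(B)$.
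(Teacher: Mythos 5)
Your proof is correct and follows essentially the same route as the paper: both arguments show that $x''_V = 0$ for every proper nonzero $V \in \mathcal{U}_x$ by exhibiting two vectors outside $V$ that coincide up to sign modulo $V$ (a rim vector's two endpoints when no spoke lies in $V$, and a spoke $e_{i+1}$ together with the rim vector $e_i - e_{i+1}$ when $e_i \in V$ but $e_{i+1} \notin V$). Your organization by the set $A = \{i : e_i \in V\}$, with the cyclic order replacing the paper's ``without loss of generality'' relabeling, is only a cosmetic difference.
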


\begin{proposition}\label{primitivetrivial}
$t_n$ is primitive in $H$ or $H^{\det}$, depending on the parity.
\end{proposition}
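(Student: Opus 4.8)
The plan is to prove primitivity purely by a bidegree count, without ever exhibiting an explicit sharbly representative of $t_n$. Write $\chi$ for the relevant character, so that $H = H(B^\chi)$ with $\chi$ trivial when $n$ is odd and $H^{\det} = H(B^\chi)$ with $\chi = \det$ when $n$ is even; in either case $t_n$ lives in $H(B^\chi)_{n,\binom n2}$. The key external fact I would record is that $k = \binom n2$ is the \emph{top} homological degree: since $\GL_a(\Z)$ is a virtual duality group of dimension $\binom a2$ with dualizing module the Steinberg module, Borel--Serre duality gives
\[ H(B^\chi)_{a,p} = H_p(\GL_a(\Z);\St_a(\Q)\otimes_K\chi) = 0 \quad\text{whenever } p > \tbinom a2,\]
for every $a\ge 0$ and every $\chi$ (the precise twist in the dualizing module is immaterial, as only the vanishing range enters).

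First I would pass to homology. Because $\Delta$ is a map of differential graded coalgebras and $K$ is a field, the Künneth theorem identifies $H(B^\chi\otimes B^\chi)\cong H(B^\chi)\otimes H(B^\chi)$, so $\Delta$ induces the coproduct on $H(B^\chi)$ and respects the bigrading. Hence
\[ \Delta(t_n) \in \bigoplus_{\substack{a+b=n\\p+q=\binom n2}} H(B^\chi)_{a,p}\otimes H(B^\chi)_{b,q}.\]
Next I would kill every summand with $0<a<n$. For such a summand to be nonzero one needs both $H(B^\chi)_{a,p}\neq 0$ and $H(B^\chi)_{b,q}\neq 0$, hence $p\le\binom a2$ and $q\le\binom b2$ by the vanishing above. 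But $p+q=\binom n2=\binom{a+b}2=\binom a2+\binom b2+ab$ with $ab\ge 1$ (as $a,b\ge 1$), giving $p+q>\binom a2+\binom b2\ge p+q$, a contradiction. So only the boundary indices $(a,b)=(n,0)$ and $(a,b)=(0,n)$ survive; there the $\GL_0$-factor forces the homological degree on the trivial side to vanish, so the surviving terms lie in $H(B^\chi)_{n,\binom n2}\otimes H(B^\chi)_{0,0}$ and $H(B^\chi)_{0,0}\otimes H(B^\chi)_{n,\binom n2}$ with $H(B^\chi)_{0,0}=K\cdot 1$.

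Finally, $H(B^\chi)$ is connected ($H(B^\chi)_{0,0}=K$), so I would invoke the counit axiom $(\epsilon\otimes\id)\circ\Delta=\id=(\id\otimes\epsilon)\circ\Delta$, with $\epsilon$ the projection onto bidegree $(0,0)$, to pin these two boundary terms down as exactly $t_n\otimes 1$ and $1\otimes t_n$. This yields $\Delta(t_n)=t_n\otimes 1+1\otimes t_n$, i.e. $t_n$ is primitive. The only genuine input is the duality vanishing $H_p(\GL_a(\Z);\St_a(\Q)\otimes_K\chi)=0$ for $p>\binom a2$; granting that, the proof reduces to the binomial identity $\binom{a+b}2=\binom a2+\binom b2+ab$, and I expect the sole point requiring care to be stating this vanishing (equivalently, the placement of $t_n$ in top homological degree) correctly, rather than any computation.
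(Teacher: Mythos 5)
Your proof is correct and is essentially the paper's own argument: the paper likewise invokes Borel--Serre to get the vanishing $H_k(\GL_a(\Z);\St_a(\Q)\otimes\chi)=0$ for $k>\binom a2$, uses the strict inequality $\binom a2+\binom b2<\binom{a+b}2$ for $a,b\ge 1$ to kill all interior summands of $\Delta(t_n)$, and concludes primitivity ``for degree reasons.'' The only difference is that you spell out what the paper leaves implicit --- the K\"unneth identification and the counit axiom pinning the boundary terms down as exactly $t_n\otimes 1$ and $1\otimes t_n$ --- which is a faithful elaboration rather than a different route.
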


Because we know that $w_n$ is nonzero in $H(B)$, it follows that it is a nonzero primitive in $H(B)$. Using \autoref{inj} and \autoref{Leray}, \autoref{wheelclasses} follows as an immediate corollary.

\begin{proof}[Proof of \autoref{primitivewheels}]
For notational convenience, set $x=w_n$.
Recall that we defined the coproduct using the formula
\[ \Delta(x) = \sum_{U \in \mathcal U_{x}} (-1)^{\pi_{U}} x'_U \otimes x''_U.\]
We want to prove that if $U\in \mathcal U_x$ is a nonzero proper subspace of $F^n$, then $x''_U = 0$. Let $\beta$ denote reduction modulo $U$.

First assume that none of the $e_1, \dots, e_n$ are in $U$. Because $U$ is nonzero, without loss of generality  it contains  $e_1-e_2$. But that means that $\beta(e_1) = \beta(e_2)$ because $e_1-e_2$ is in the kernel of $\beta$ and thus $x''_U = 0$.

We can therefore assume without loss of generality that $e_1 \in U$. Let $k\ge 1$ be so that $e_1, \dots, e_k \in U$ and $e_{k+1} \not\in U$. This implies that $e_k-e_{k+1} \not\in U$, because $e_k \in U$. But $\beta(e_k- e_{k+1}) = -\beta(e_{k+1})$ because $e_k$ is in the kernel of $\beta$. Therefore $x''_U=0$.

This implies that $x''_U$ is only nonzero if $U=0$ or $U=F^n$ and we get that
\[ \Delta(x) = 1\otimes x + x\otimes 1.\qedhere\]
%
%
%
%
%
%
%
%
%
\end{proof}

\begin{proof}[Proof of \autoref{primitivetrivial}]
By Borel--Serre \cite{BoSe}, 
\[ H_k(\SL_n(\Z);\St_n(\Q)) \cong H_k(\GL_n(\Z); \St_n(\Q))\otimes H_k(\GL_n(\Z); \St_n(\Q) \otimes \det)\cong0\]
for $k>\binom n2$. And
\[ \binom a2 + \binom b2 \le \binom{a+b}2\]
with equality only if $a$ or $b$ is zero. This means that the coproduct $\Delta(t_n)$ of $t_n \in H_{\binom n2}(\GL_n(\Z);\St_n(\Q) \otimes \chi)$, where $\chi$ is trivial if $n$ is odd and the determinant representation if $n$ is even, can only be $1\otimes t_n + t_n \otimes 1$ for degree reasons.
\end{proof}

\bibliographystyle{amsalpha}
\bibliography{refs}

\vspace{.5cm}

\end{document}